\newtheorem{thm}{Theorem}[section]
\newtheorem{lemma}[thm]{Lemma}
\newtheorem{prop}[thm]{Proposition}
\newtheorem{claim}[thm]{Claim}
\newtheorem{cor}[thm]{Corollary}
\theoremstyle{definition}
\newtheorem*{defn}{Definition}
\theoremstyle{remark}
\newtheorem*{obs}{Observation}
\DeclareMathAlphabet\mathbfcal{OMS}{cmsy}{b}{n}
\def\b{\boldsymbol}
\def\e{\mathrm}
\def\be{\mathbf}
\def\Proj{\mathrm{Proj}}
\def\bProj{\mathbf{Proj}}
\def\G{\mathbfcal G}
\def\E{\mathbfcal E}
\def\J{\mathrm J} 
\def\bJ{\mathrm J}
\title{Asymptotics of Pattern Avoidance in the Klazar Set Partition and Permutation-Tuple Settings}
\author{Benjamin Gunby \\ 
\small Department of Mathematics\\[-0.8ex]
\small Harvard University\\[-0.8ex]
\small Cambridge, Massachusetts, U.S.A.\\
\small{\tt bgunby@g.harvard.edu}
\and D\"om\"ot\"or P\'alv\"olgyi\footnote{Research supported by the Marie Sk\l odowska-Curie action of the EU, under grant IF 660400.} \\
\small Dep.\ of Pure Mathematics and Mathematical Statistics\\[-0.8ex]
\small University of Cambridge\\[-0.8ex]
\small Cambridge, UK\\
\small{\tt dom@cs.elte.hu}
}
\begin{document}
\maketitle

\begin{abstract}
We consider asymptotics of set partition pattern avoidance in the sense of Klazar. Our main result derives the asymptotics of the number of set partitions avoiding a given set partition within an exponential factor, which leads to a classification of possible growth rates of set partition pattern classes. We further define a notion of permutation-tuple avoidance, which generalizes notions of Aldred et al. and the usual permutation pattern setting, and similarly determine the number of permutation-tuples avoiding a given tuple to within an exponential factor.
\end{abstract}

\section{Introduction}
A fundamental question of pattern avoidance is that of asymptotics. That is, for some pattern $p$, how does the avoidance function $A_n(p)$, equal to the number of patterns of size $n$ avoiding $p$, grow? More generally, what are the possible growth speeds of pattern classes? This has been especially well-studied in the most classical pattern avoidance area, that of permutations. The most famous result of this kind is the Marcus-Tardos Theorem, known earlier as the Stanley-Wilf Conjecture \cite{StanleyWilf}, and generalized repeatedly in later years.

Recently, the study of pattern avoidance and the corresponding asymptotics has branched into other structures than permutations. Klazar \cite{K} proposed a notion of set partition pattern avoidance. (There are other possible definitions of set partition avoidance; for example, RGF-type pattern avoidance, studied in great detail by for example Mansour \cite{M}, which we will not discuss in this paper.) Klazar proved several results about special cases involving the generating function of the avoidance sequence. Later, his conjecture about the case when the partitions have what this paper will refer to as permutability $1$ (Klazar refers to these as srps) was proven independently by Klazar and Marcus \cite{KM} and Balogh, Bollob\'as, and Morris \cite{BBM}. The primary result of this paper will be a generalization of those results, completely classifying speeds of set partition pattern classes up to an exponential factor.

We will see that the study of set partition pattern avoidance motivates a new notion of $k$-tuple permutation pattern avoidance, which generalizes classical permutation pattern avoidance (corresponding to $k=1$) and the setting of pairs of permutations in Aldred et al. \cite{AADHHM}, which corresponds to $k=2$. We will see that our set partition result implies a generalization of the Marcus-Tardos theorem to this setting.
\section{Definitions and Preliminary Results}
\begin{defn}
A \emph{set partition} of $n$ is a partition of $[n]$ into sets, where we ignore ordering of sets and ordering within the sets. We will write set partitions with slashes between the sets, as in $T_1/T_2/\cdots/T_m$ for some $m$. The \emph{standard form} of a set partition is what is obtained from writing each $T_i$ in increasing order, and then rearranging the sets so that $\min T_1<\min T_2<\cdots<\min T_m$. The $T_i$ are called the \emph{blocks} of the partition.
\end{defn}
For example, $1635/24$ and $24/1356$ are not in standard form; the standard form for this partition is $1356/24$.

\begin{defn}
The \emph{Bell number} $B_n$ is the number of set partitions of $[n]$.
\end{defn}

\begin{defn}
A set partition $\pi$ of $n$ \emph{contains} a set partition $\pi'$ of $k$ in the Klazar sense (which we will use for the remainder of this paper) if there is a subset $S$ of $[n]$ of cardinality $k$ such that when $\pi$ is restricted to the elements of $S$, the result is order-isomorphic to $\pi'$. Otherwise, we say $\pi$ \emph{avoids} $\pi'$.
\end{defn}
For example, $136/5/27$ contains $14/23$ because when we restrict $136/5/27$ to the set $\{2,3,6,7\}$, the result is $36/27$, which is order-isomorphic to $23/14$, standardizing to $14/23$. However, it avoids $1/234$.

We can think of containment in the following way: if we have some $f:[m]\to[n]$ and a set partition of $[n]$, we can take the pullback under $f$ to get a partition of $[m]$, where $a$ and $b$ are in the same partition if and only if $f(a)$ and $f(b)$ are. Then $\pi$ contains $\pi'$ if and only if $\pi'$ is the pullback of $\pi$ under some order-preserving injection.

Note that this Klazar notion of avoidance differs from the RGF notion of pattern avoidance in set partitions, studied in detail by Mansour \cite{M}, where switching the order of the sets during standardization is not allowed.

We will be concerned with the enumeration of the number of partitions of a given length that avoid a particular pattern.
\begin{defn}
If $\pi$ is some set partition of $k$, let $B_n(\pi)$ be the number of set partitions of $n$ that avoid $\pi$. (Note that the notation is analogous to that for Bell numbers.)
\end{defn}
Much of this paper is devoted to progress towards general asymptotic bounds for $B_n(\pi)$.
\begin{defn}
A \emph{layered partition} is a partition $T_1/\cdots/T_m$ such that $\max T_i<\min T_{i+1}$ for all $i\in [m-1]$. Equivalently, each set consists of an interval of consecutive integers.
\end{defn}
For example, $12/3456/789$ is layered while $13/2456/789$ is not.

Alweiss \cite{Ryan} found the correct log-asymptotic for $B_n(\pi)$ in the case where $\pi$ is layered. Earlier, Klazar and Marcus in \cite{KM} classified the cases where $B_n(\pi)$ grows at most exponentially, as a corollary of their Corollary 2.2.

An important notion in this paper will be relating set-partition pattern avoidance to tuple permutation pattern avoidance. To this end, we define the following notion.
\begin{defn}
Let $\sigma_1,\ldots,\sigma_d$ be permutations $[n]\to [n]$. We define the \emph{set partition correspondent to $(\sigma_1,\ldots,\sigma_d)$} to be the partition $T_1/\cdots/T_n$ of $(d+1)n$ such that $T_i=\{i,n+\sigma_1(i),2n+\sigma_2(i),\ldots,dn+\sigma_d(i)\}$. It is easy to see that this is indeed a set partition, and we will write it $[\sigma_1,\ldots,\sigma_d]$.
\end{defn} 
Notice that a set partition of $(d+1)n$ is correspondent to some $(\sigma_1,\ldots,\sigma_d)$ if and only if every set in the partition contains exactly one element from each of $\{1,\ldots,n\}$, $\{n+1,\ldots,2n\}$,\ldots,$\{dn+1,\ldots,(d+1)n\}$. Klazar \cite{K} referred to partitions of the form $[\sigma]$ (so $d=1$) or partitions contained in any partition of this form as \emph{srp}'s, and as previously alluded to, Klazar and Marcus proved in \cite{KM} that for $\pi$ an srp, there exists $c>0$ with $B_n(\pi)\leq c^n$.

Now, we define what we will call \emph{parallel pattern avoidance} for $d$-tuples of permutations $(\sigma_1,\ldots,\sigma_d)$.
\begin{defn}
If $\sigma_1,\ldots,\sigma_d\in S_n$ and $\sigma'_1,\ldots,\sigma'_d\in S_m$, $(\sigma_1,\ldots,\sigma_d)$ contains (respectively avoids) $(\sigma'_1,\ldots,\sigma'_d)$ if there exists (respectively does not exist) indices $c_1<\cdots<c_m$ such that $\sigma_i(c_1)\sigma_i(c_2)\cdots\sigma_i(c_m)$ is order-isomorphic to $\sigma'_i$ for all $i$.
\end{defn}
We will occasionally say `contains/avoids in parallel' to refer to this notion in particular.

For $d=1$, parallel pattern avoidance is equivalent to the classical case of permutation pattern containment/avoidance. This idea of parallel avoidance in $d$-tuples of permutations also reduces to several other interesting concepts in special cases; for example, $(\sigma_1,\sigma_2)$ avoids $(12,21)$ if and only if $\sigma_1^{-1}\leq\sigma_2^{-1}$ in the Weak Bruhat Order, which has been previously studied; for example, see \cite{HP} and $A007767$ in \cite{OEIS}.

We now relate this to our topic of partition pattern avoidance.
\begin{prop}\label{parallel}
Let $\sigma_1,\ldots,\sigma_d$ be permutations in $S_n$ and $\sigma'_1,\ldots,\sigma'_d$ be permutations in $S_m$. The following two statements are equivalent:
\begin{itemize}
\item
The $d$-tuple of permutations $(\sigma_1,\ldots,\sigma_d)$ contains the $d$-tuple of permutations $\sigma'_1,\ldots,\sigma'_d$.
\item
The set partition $[\sigma_1,\ldots,\sigma_d]$ contains the set partition $[\sigma'_1,\ldots,\sigma'_d]$.
\end{itemize}
\end{prop}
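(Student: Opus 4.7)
The plan is to prove both directions by explicit construction, leveraging the fact that every block $T_i$ of $[\sigma_1,\ldots,\sigma_d]$ has size exactly $d+1$ with precisely one element in each ``layer'' $L_j := \{jn+1,\ldots,(j+1)n\}$ for $j=0,1,\ldots,d$, and similarly for $[\sigma'_1,\ldots,\sigma'_d]$ with layers of size $m$.

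For the forward direction, suppose $c_1<\cdots<c_m$ witness that $(\sigma_1,\ldots,\sigma_d)$ contains $(\sigma'_1,\ldots,\sigma'_d)$ in parallel. I would take $S=T_{c_1}\cup\cdots\cup T_{c_m}\subseteq [(d+1)n]$, a set of size $(d+1)m$, and compute the standardization of the restriction of $[\sigma_1,\ldots,\sigma_d]$ to $S$. Since the $m$ elements in $S\cap L_0$ are $c_1<\cdots<c_m$, they receive ranks $1,2,\ldots,m$; since all elements of $S\cap L_j$ are greater than those of $S\cap L_{j-1}$, the element $jn+\sigma_j(c_i)$ receives rank $jm+r$, where $r$ is the rank of $\sigma_j(c_i)$ among $\sigma_j(c_1),\ldots,\sigma_j(c_m)$. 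By hypothesis this rank equals $\sigma'_j(i)$, so block $i$ of the standardized restriction is exactly $\{i,m+\sigma'_1(i),\ldots,dm+\sigma'_d(i)\}$, which matches $[\sigma'_1,\ldots,\sigma'_d]$.

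For the reverse direction, suppose some $S\subseteq[(d+1)n]$ of size $(d+1)m$ witnesses that $[\sigma_1,\ldots,\sigma_d]$ contains $[\sigma'_1,\ldots,\sigma'_d]$ as set partitions. The key structural observation is that the restriction must consist of exactly $m$ blocks each of size $d+1$ (matching the pattern), but each $T_i \cap S$ has size at most $d+1$, so a pigeonhole argument forces $S$ to be precisely a union $T_{c_1}\cup\cdots\cup T_{c_m}$ for some indices $c_1<\cdots<c_m$, with all other $T_i$ disjoint from $S$. Running the same standardization computation as above, the restriction standardizes to $[\rho_1,\ldots,\rho_d]$, where $\rho_j$ is the standardization of the word $\sigma_j(c_1)\sigma_j(c_2)\cdots\sigma_j(c_m)$. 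The hypothesis that this equals $[\sigma'_1,\ldots,\sigma'_d]$ forces $\rho_j=\sigma'_j$ for each $j$, which is exactly the statement that $c_1<\cdots<c_m$ witnesses parallel containment.

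I don't expect a serious obstacle: the bijection between the two containment notions is essentially tautological once one unpacks how the layered structure of $[\sigma_1,\ldots,\sigma_d]$ interacts with standardization. The only point requiring care is justifying in the reverse direction that $S$ must be a union of whole blocks $T_{c_i}$, which follows from the rigid block-size structure noted above.
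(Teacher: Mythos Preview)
Your proposal is correct and follows essentially the same approach as the paper's proof: both directions use the rigid block-size structure (every block has exactly $d+1$ elements, one per layer) to identify the witnessing subset $S$ with a union of whole blocks $T_{c_1}\cup\cdots\cup T_{c_m}$, and then verify that the order-isomorphism between the restriction and $[\sigma'_1,\ldots,\sigma'_d]$ is precisely equivalent to the parallel-containment condition on the indices $c_1,\ldots,c_m$. Your framing in terms of layers and standardization ranks is a clean way to phrase what the paper does via direct element-by-element comparison of $j_1 n+\sigma_{j_1}(c_{i_1})$ against $j_2 n+\sigma_{j_2}(c_{i_2})$, but the content is the same.
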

\begin{proof}
If $(\sigma_1,\ldots,\sigma_d)$ contains $(\sigma'_1,\ldots,\sigma'_d)$, we have indices $c_1<\cdots<c_m$ with $\sigma_i(c_1)\cdots\sigma_i(c_m)$ order-isomorphic to $\sigma_i$. $[\sigma_1,\ldots,\sigma_d]$ has blocks $T_1,\ldots,T_n$ given by $T_i=\{i,n+\sigma_1(i),2n+\sigma_2(i),\ldots,dn+\sigma_d(i)\}$. Restricting this to simply the elements in $T_{c_1},\ldots,T_{c_m}$, we have blocks given by $\{c_i,n+\sigma_1(c_i),\ldots,dn+\sigma_d(c_i)\}$. We show that this is order-isomorphic to $[\sigma'_1,\ldots,\sigma'_d]$. Since $c_i=\min T_{c_i}$, and the $c_i$ are increasing, the block $T_{c_i}$ must correspond the $i^{th}$ block of $[\sigma'_1,\ldots,\sigma'_d]$, which is $\{i,m+\sigma'_1(i),\ldots,dm+\sigma'_d(i)\}$. Thus, we must show that $j_1n+\sigma_{j_1}(c_{i_1})<j_2n+\sigma_{j_2}(c_{i_2})$ if and only if $j_1m+\sigma_{j_1}(i_1)<j_2m+\sigma_{j_2}(i_2)$. But since $1\leq\sigma_a(b)\leq n$ and $1\leq\sigma'_a(b)\leq m$ for all $a,b$, the first statement is equivalent to $j_1<j_2$ or $j_1=j_2=j$ and $\sigma_j(c_{i_1})<\sigma_j(c_{i_2})$, and the second is equivalent to $j_1<j_2$ or $j_1=j_2=j$ and $\sigma'_j(i_1)<\sigma'_j(i_2)$. These are equivalent by the definition of pattern containment for $k$-tuples of permutations.

Now suppose $[\sigma_1,\ldots,\sigma_d]$ contains $[\sigma'_1,\ldots,\sigma'_d]$. Since all blocks of both partitions have size $d+1$, the blocks of the latter partition must correspond exactly to $m$ block of the former, say blocks $T_{c_1},\ldots,T_{c_m}$ with $c_1<\cdots<c_m$. Now following the exact same argument in reverse, we see that $(\sigma_1,\ldots,\sigma_d)$ contains $(\sigma'_1,\ldots,\sigma'_d)$ (at indices $c_1,\ldots,c_m$), as we showed the ordering information is exactly equivalent in both cases.
\end{proof}
The concept of permutation-correspondent partitions gives us a useful statistic.
\begin{defn}
The \emph{permutability} of a set partition $\pi$, which we will call $\text{pm}(\pi)$, is the minimum $d$ such that there exists a $d$-tuple of permutations $(\sigma_1,\ldots,\sigma_d)$ such that the correspondent partition $[\sigma_1,\ldots,\sigma_d]$ contains $\pi$.
\end{defn}
Note that as one would expect, $[\sigma_1,\ldots,\sigma_d]$ has permutability $d$, as it has a block of size $d+1$, which is not contained in $[\sigma'_1,\ldots,\sigma'_{d-1}]$ for any choice of the $\sigma'_i$.
If $\pi$ is the set partition of $n$ with all blocks of size $1$, then $\text{pm}(\pi)=0.$

\section{Old and New Results}
A main purpose of this paper is to determine as closely as possible the asymptotics of $B_n(\pi)$. It is not difficult to show a lower bound for $B_n(\pi)$; indeed, we will show the following.
\begin{thm}\label{lowerbound}
Let $\pi$ be a set partition with $\text{pm}(\pi)\geq 1$. Then there exists a constant $c_1(\pi)>0$ such that
\[B_n(\pi)\geq c_1(\pi)^nn^{n\left(1-\frac{1}{\text{pm}(\pi)}\right)}\]
for all $n$.
\end{thm}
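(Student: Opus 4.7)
The plan is to exploit the minimality in the definition of permutability: since $\text{pm}(\pi) = d$, no set partition of the form $[\sigma_1, \ldots, \sigma_{d-1}]$ contains $\pi$, supplying a rich family of $\pi$-avoiders to count. The case $d = 1$ is immediate, since $B_n(\pi) \geq 1$ always (the all-singletons partition avoids $\pi$ by the assumption $\text{pm}(\pi) \geq 1$) and any $c_1 \in (0, 1]$ works; henceforth I assume $d \geq 2$.

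For $n = dm$, I would enumerate the $(m!)^{d-1}$ tuples $(\sigma_1, \ldots, \sigma_{d-1}) \in (S_m)^{d-1}$. Distinct tuples produce distinct partitions $[\sigma_1, \ldots, \sigma_{d-1}]$ of $[dm]$, since the block containing $i$ has exactly one element in each range $(jm, (j+1)m]$, and that element minus $jm$ recovers $\sigma_j(i)$. Each such partition avoids $\pi$ by minimality of $d$, yielding $B_{dm}(\pi) \geq (m!)^{d-1}$. Stirling's inequality then gives $(m!)^{d-1} \geq (m/e)^{m(d-1)}$, which rewrites (with $n = dm$) as $c_0^{dm}(dm)^{dm(1-1/d)}$ for $c_0 = (de)^{-(d-1)/d} > 0$.

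For arbitrary $n = dm + r$ with $0 \leq r < d$, I would extend each $(d-1)$-tuple correspondent on $[dm]$ to a partition on $[n]$ by appending the $r$ singleton blocks $\{dm+1\}, \ldots, \{n\}$; this produces $(m!)^{d-1}$ distinct partitions of $[n]$. To verify avoidance, I would establish the auxiliary claim that for every $j \geq 0$ such that the elements $|\pi|-j+1, \ldots, |\pi|$ each form a singleton block of $\pi$, the restriction $\pi^{[j]}$ of $\pi$ to $[|\pi|-j]$ also satisfies $\text{pm}(\pi^{[j]}) = d$. Granting this, if a padded partition contained $\pi$ via a subset $S = S_1 \sqcup S_2$ with $S_1 \subseteq [dm]$ and $S_2 \subseteq [dm+1, n]$, then after standardization the elements of $S_2$ occupy the top $|S_2|$ positions as singletons, forcing the top $|S_2|$ elements of $\pi$ to be singletons and forcing $[\sigma_1, \ldots, \sigma_{d-1}]|_{S_1} \cong \pi^{[|S_2|]}$, contradicting the claim. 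Finally, comparing $(dm)^{dm(1-1/d)}$ with $n^{n(1-1/d)}$ loses only a polynomial factor, which is swallowed by taking $c_1$ slightly smaller than $c_0$.

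The main obstacle is the auxiliary claim. The direction $\text{pm}(\pi^{[j]}) \leq d$ is immediate, since $\pi^{[j]}$ is a restriction of $\pi$ and any tuple correspondent containing $\pi$ also contains $\pi^{[j]}$. For the reverse, suppose $\pi^{[j]}$ embedded in some $(d-1)$-tuple correspondent $[\tau_1, \ldots, \tau_{d-1}]$ on $[(d-1)m']$ via a subset $S_0$; I would enlarge the host correspondent by extending each $\tau_i$ to some $\tau'_i \in S_M$ for $M$ sufficiently large that there remain $j$ blocks of $[\tau'_1, \ldots, \tau'_{d-1}]$ disjoint from $S_0$ whose elements all exceed $\max S_0$. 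Choosing one such element from each of these $j$ blocks as a trailing singleton yields an embedding of $\pi$ in a $(d-1)$-tuple correspondent, contradicting $\text{pm}(\pi) = d$. The technical core is showing that such enlargements always exist and that the constructed extension remains order-isomorphic to $\pi$.
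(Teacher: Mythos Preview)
Your proposal is correct and follows essentially the same strategy as the paper: both count $(d-1)$-tuple correspondents $[\sigma_1,\ldots,\sigma_{d-1}]$ to get the main $(m!)^{d-1}$ lower bound when $d\mid n$, and both handle general $n$ by adjoining singleton blocks, using that stripping singletons from a set partition preserves its permutability.

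The organizational differences are worth noting. The paper first replaces $\pi$ by the subpartition $\pi'$ obtained by deleting \emph{all} singleton blocks (invoking an unstated ``interval criterion for permutability'' to see $\text{pm}(\pi')=\text{pm}(\pi)$), and then observes that any $\pi'$-avoider remains a $\pi'$-avoider after inserting singletons at $\binom{n}{i}$ arbitrary positions. You instead keep $\pi$ intact, append singletons only at the \emph{top} of the avoider, and prove the needed special case (deleting trailing singletons preserves permutability) directly by your enlargement construction. Your enlargement sketch is right: extending each $\tau_i\in S_{m'}$ to $\tau_i'\in S_{m'+j}$ by fixing the new points, and then reading off the top $j$ elements of the last layer, supplies the required $j$ extra singletons above $S_0$ in pairwise new blocks; the order-isomorphism is automatic because layer membership is preserved under the shift. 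The paper's reduction is slightly slicker and yields an extra $\binom{n}{i}$ factor that is in any case absorbed into the exponential, while your version is more self-contained since it does not rely on the interval criterion. One small slip: a $(d-1)$-tuple correspondent lives on $[dm']$, not $[(d-1)m']$.
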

We will also prove the following upper bound, which will determine the growth rate of $B_n(\pi)$ to within an exponential factor.
\begin{thm}\label{mainthm}
Let $\pi$ be a set partition with $\text{pm}(\pi)\geq 1$. Then there exists a constant $c_2(\pi)$ such that
\[B_n(\pi)\leq c_2(\pi)^nn^{n\left(1-\frac{1}{\text{pm}(\pi)}\right)}\]
for all $n$. If $\text{pm}(\pi)=0$, then there exists a constant $c_2(\pi)$ so that
\[B_n(\pi)\leq c_2(\pi)^n.\]
\end{thm}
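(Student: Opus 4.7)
By definition of permutability there exist $\sigma_1,\ldots,\sigma_d\in S_k$ (with $d=\text{pm}(\pi)$ and $k=|\pi|$) so that $\pi$ is contained in $[\sigma_1,\ldots,\sigma_d]$; by transitivity of Klazar containment, any partition containing $[\sigma_1,\ldots,\sigma_d]$ also contains $\pi$, so $B_n(\pi)\leq B_n([\sigma_1,\ldots,\sigma_d])$ and we may assume $\pi=[\sigma_1,\ldots,\sigma_d]$. For $\text{pm}(\pi)=0$, $\pi$ is forced to be the all-singletons partition on $[k]$, so $\rho$ avoids $\pi$ iff $\rho$ has at most $k-1$ blocks; then $B_n(\pi)\leq\sum_{m<k}S(n,m)\leq k(k-1)^n$, which is the desired exponential bound.

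For $d\geq 1$ I would classify set partitions of $[n]$ by the number $m$ of their \emph{big} blocks (those of size $\geq d+1$). When $m<k$ the pattern $[\sigma_1,\ldots,\sigma_d]$ is automatically avoided, since an occurrence requires $k$ blocks each of size $\geq d+1$; such $\rho$ I would count by first choosing which $N$ elements are big-block elements and assigning them to at most $k-1$ labels (at most $k^n$ options), and then partitioning the remaining $n-N$ elements into blocks of size $\leq d$. The number of set partitions of $[n']$ with all blocks of size $\leq d$ is bounded, via a Stirling analysis of $\frac{n'!}{\prod(i!)^{m_i}\prod m_i!}$ dominated by the near-uniform profile of roughly $n'/d$ blocks of size $d$, by $c^{n'}(n')^{n'(1-1/d)}$. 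Summing over $N$ yields the target bound for this case.

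The substantive case is $m\geq k$, where avoidance must be exploited. My plan is to first establish a \emph{$d$-tuple Marcus--Tardos theorem}: the number of $(\tau_1,\ldots,\tau_d)\in S_m^d$ avoiding $(\sigma_1,\ldots,\sigma_d)$ in parallel is at most $C(\sigma)^m(m!)^{d-1}$. This should follow from the Marcus--Tardos grid induction adapted to $d$ permutation matrices simultaneously: overlay them as a single $m\times m$ matrix with entries in $\{0,1,\ldots,d\}$, partition rows and columns into consecutive groups of constant size, and argue that too many heavy grid cells force a parallel occurrence of $(\sigma_1,\ldots,\sigma_d)$. To apply the lemma to the big-block part of $\rho$, I would cut the $N$ big-block elements of $[n]$ into $d+1$ consecutive layers of size $\approx m$; in the canonical situation where each big block contributes exactly one element to each layer, the big-block substructure is of the form $[\tau_1,\ldots,\tau_d]$ whose parallel avoidance is inherited from $\rho$, and the lemma produces a bound $C^N(m!)^{d-1}\leq c^N N^{N(1-1/d)}$ after using $m\leq N/(d+1)$ and Stirling.

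The principal obstacle is the non-canonical case, where some big blocks exceed size $d+1$ or place multiple elements into a single layer. The technical core of the proof will be showing that the additional data needed to specify these deviations---block sizes above $d+1$ and the per-element layer assignments---costs only a further $c^n$ factor beyond the canonical bound. I expect this to follow from a careful choice of layer cutpoints conditional on the block-size profile, together with a direct count of the ``excess'' elements beyond the first $d+1$ in each big block, in the spirit of how Alweiss's layered-case argument is here promoted to arbitrary $\pi$.
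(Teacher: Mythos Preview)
Your reduction to $\pi=[\sigma_1,\ldots,\sigma_d]$ and the treatment of $\text{pm}(\pi)=0$ are fine, and the split into small blocks (size $\le d$) versus big blocks (size $\ge d+1$) is a reasonable organizing principle. The approach breaks, however, at your key lemma.

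\textbf{The proposed $d$-tuple Marcus--Tardos bound is false.} You assert that the number of $(\tau_1,\ldots,\tau_d)\in S_m^d$ parallel-avoiding $(\sigma_1,\ldots,\sigma_d)$ is at most $C^m(m!)^{d-1}$. But the paper's Theorem~\ref{permthm} gives a \emph{lower} bound of $c_1^m m^{m(d^2-1)/d}$ for any nontrivial pattern (proved independently via random $(d{+}1)$-dimensional orders, not via Theorem~\ref{mainthm}). Since $(d^2-1)/d=(d-1)+(d-1)/d>d-1$ for every $d\ge 2$, this lower bound exceeds $C^m(m!)^{d-1}\sim C^m m^{m(d-1)}$ by a factor of $m^{m(d-1)/d}$. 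So the lemma you plan to prove simply does not hold, and the sketched ``overlay $d$ matrices and run the grid argument'' cannot yield it. With the \emph{correct} exponent $(d^2-1)/d$, the canonical-case arithmetic still closes (since $m=N/(d+1)$ gives $m\cdot(d^2-1)/d=N(1-1/d)$), but now you need an independent proof of that bound---and in the paper that bound is \emph{derived from} Theorem~\ref{mainthm}, so invoking it here would be circular.

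\textbf{The non-canonical case is a genuine obstacle, not a mop-up.} Even when every big block has size exactly $d+1$, there is no reason its elements fall one-per-layer under any fixed cut of the $N$ big-block positions into $d+1$ consecutive layers; and when some blocks are larger, the layered picture degrades further. The information needed to record which layer each element lands in is not obviously only exponential in $n$: a priori it is on the order of $(d+1)^N$, which is fine, but what is \emph{not} fine is that after recording it you no longer have a clean $[\tau_1,\ldots,\tau_d]$ structure on which to invoke a permutation-tuple bound. You would need something like the paper's hypergraph machinery to control this.

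\textbf{How the paper actually proceeds.} The paper does not separate big and small blocks at all. Instead it proves a hypergraph generalization of F\"uredi--Hajnal (Theorem~\ref{hypthm}): any ordered hypergraph on $[n]$ avoiding a fixed $(d{+}1)$-permutation hypergraph has total weight $O(n^d)$. From this, the number of such hypergraphs on $[s]$ is $2^{O(s^d)}$. The proof of Theorem~\ref{mainthm} then contracts $[n]$ into $s\approx n^{1/d}$ intervals, producing a hypergraph $G'$ on $[s]$ that still avoids the forbidden pattern; there are $2^{O(s^d)}=e^{O(n)}$ choices for $G'$, and a direct count (optimized via Lagrange multipliers over block-size profiles) shows at most $e^{O(n)}(n/s)^n=e^{O(n)}n^{n(1-1/d)}$ set partitions project to any given $G'$. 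The ``layering'' you are reaching for is, in effect, replaced by this interval contraction, and the role of your permutation-tuple lemma is played by the hypergraph extremal bound of Theorem~\ref{hypthm}.
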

Note that Klazar and Marcus proved Theorem \ref{mainthm} in the case where $\text{pm}(\pi)=1$ in \cite{KM}.

The most general result of this paper deals with asymptotics of parallel avoidance. We first give the following definition.
\begin{defn}
If $\sigma_1,\ldots,\sigma_d$ are permutations of some $[m]$, we say that $S_n^d(\sigma_1,\ldots,\sigma_d)$ is the number of $d$-tuples of permutations $(\sigma'_1,\ldots,\sigma'_d)$ with $\sigma'_i\in S_n$ such that $(\sigma'_1,\ldots,\sigma'_d)$ avoids $(\sigma_1,\ldots,\sigma_d)$ in parallel.
\end{defn}

The famous Marcus-Tardos Theorem \cite{StanleyWilf}, building on the work of Klazar \cite{K3}, states the following (corresponding to the case $d=1$).

\begin{thm}[Marcus-Tardos \cite{StanleyWilf}]\label{StanleyWilf}
Let $m\in\mathbb{N}$. For any permutation $\sigma\in S_m$, let $S_n(\sigma)=S_n^1(\sigma)$ be the number of permutations in $S_n$ avoiding $\sigma$. Then for all $\sigma$ there exists a constant $c$ such that
\[S_n(\sigma)\leq c^n.\]
\end{thm}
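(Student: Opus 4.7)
The plan is to follow the original two-stage argument of Marcus and Tardos. First, use Klazar's reduction \cite{K3} to derive the statement from the F\"uredi--Hajnal conjecture: every $n \times n$ $0$-$1$ matrix avoiding the $m \times m$ permutation matrix $P_\sigma$ as a submatrix has at most $c_P \cdot n$ ones. Second, establish the F\"uredi--Hajnal bound by a block-partitioning argument.

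Klazar's reduction proceeds as follows. Given $\pi \in S_{2n}$ avoiding $\sigma$, partition its permutation matrix $M_\pi$ into $2 \times 2$ blocks and contract to the $n \times n$ $0$-$1$ matrix $M'$ whose cell is $1$ iff the corresponding block is nonzero. Since $\pi$ avoids $\sigma$, $M'$ avoids $P_\sigma$, hence has at most $c_P n$ ones. Each nonzero $2 \times 2$ block of $M_\pi$ admits only a bounded number of fillings compatible with the permutation property, so carefully counting fillings and $M'$'s yields a recursion of the form $S_{2n}(\sigma) \leq S_n(\sigma) \cdot C^n$, which iterates to $S_n(\sigma) \leq C^n$.

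For the F\"uredi--Hajnal bound, let $h(n)$ denote the maximum number of ones in an $n \times n$ matrix avoiding $P_\sigma$. Partition into $m^2 \times m^2$ blocks, giving an $N \times N$ grid with $N = n/m^2$. Call a block \emph{wide} if its ones span at least $m$ distinct internal columns, and \emph{tall} if they span at least $m$ internal rows; a block that is neither wide nor tall has at most $(m-1)^2$ ones. The key combinatorial lemma asserts that each row of blocks contains fewer than $(m-1)\binom{m^2}{m}$ wide blocks (and symmetrically for tall blocks per column): otherwise pigeonhole produces $m$ wide blocks sharing a common $m$-subset of internal columns that each contain a $1$, from which a copy of $P_\sigma$ is extracted. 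Using also that the contracted $N \times N$ block-occupancy matrix avoids $P_\sigma$ (and so has at most $h(N)$ ones), one obtains a recursion of the shape $h(n) \leq (m-1)^2 h(N) + C(m) \, n$. Dividing by $n = N m^2$ yields $h(n)/n \leq ((m-1)/m)^2 \cdot h(N)/N + C'(m)$, and since $(m-1)^2/m^2 < 1$ this is a contraction that forces $h(n)/n = O(1)$.

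The most delicate step is the combinatorial lemma bounding wide blocks per row. Extracting a copy of $P_\sigma$ from $m$ wide blocks sharing a column-subset requires coordinating row selections across blocks whose internal row distributions of ones are a priori unrelated; the argument must leverage the freedom in choosing among multiple candidate ones per shared column within each wide block to realize the specific row-order pattern dictated by $\sigma$. Everything else (the two contractions, the bookkeeping in the recursion, and the final telescoping) is routine once this lemma is in hand.
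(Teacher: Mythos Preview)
The paper does not give its own proof of this theorem; it is stated as the known result of Marcus and Tardos and simply cited to \cite{StanleyWilf}, with Klazar's reduction cited to \cite{K3}. So there is nothing in the paper to compare your argument against beyond the citation, and your outline is indeed the standard Marcus--Tardos proof that the paper is invoking.

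Two small corrections to your sketch are worth noting. First, Klazar's recursion runs on $T_n$, the number of $n\times n$ $0$--$1$ matrices avoiding $P_\sigma$, not on $S_n(\sigma)$ itself: the $2\times 2$ contraction of a permutation matrix is generally not a permutation matrix, so one cannot close the recursion at the level of permutations. One proves $T_{2n}\le 15^{c_P n}\,T_n$, iterates to $T_n\le c^n$, and only then uses $S_n(\sigma)\le T_n$. Second, in the block argument the key lemma bounds the number of \emph{wide} blocks in each block-\emph{column} (and symmetrically tall blocks in each block-\emph{row}), not the reverse. With the correct orientation your ``most delicate step'' evaporates: if $m$ wide blocks in a single block-column share the same $m$-subset $\{s_1<\cdots<s_m\}$ of internal columns, then in the $i$-th such block (ordered by block-row) one simply picks a $1$ in internal column $s_{\sigma(i)}$; the block-row index already forces the rows into increasing order, and the internal column choices realize $\sigma$. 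No coordination of row selections is needed.
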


Let $\sigma_1,\ldots,\sigma_d$ be permutations, say in $S_m$. Then for every $(\sigma'_1,\ldots,\sigma'_d)$ that avoids $(\sigma_1,\ldots,\sigma_d)$, we have a corresponding set partition $[\sigma'_1,\ldots,\sigma'_d]$ avoiding $[\sigma_1,\ldots,\sigma_d]$ by Proposition \ref{parallel}. Thus, Theorem \ref{mainthm} should imply a corresponding bound on parallel permutation pattern avoidance. This turns out to suggest a natural generalization of Theorem \ref{StanleyWilf} to $d$-tuples, in the form of the following.

\begin{thm}\label{permthm}
Let $m>1$ and let $\sigma_1,\ldots,\sigma_d\in S_m$ be permutations. Then the following hold.
There exists constants $c_2>c_1>0$ (depending on the $\sigma_i$) such that $c_1^nn^{n\frac{d^2-1}{d}}\leq S_n^k(\sigma_1,\ldots,\sigma_k)\leq c_2^nn^{n\frac{d^2-1}{d}}$ for all $n$.
\end{thm}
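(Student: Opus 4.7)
This falls out essentially for free from Theorem \ref{mainthm} together with Proposition \ref{parallel}. Set $\pi = [\sigma_1, \ldots, \sigma_d]$, a set partition of $[(d+1)m]$ whose permutability equals $d$ (as noted after the definition of permutability). By Proposition \ref{parallel}, the map $(\sigma'_1, \ldots, \sigma'_d) \mapsto [\sigma'_1, \ldots, \sigma'_d]$ sends parallel-avoiders in $S_n^d$ injectively to Klazar-avoiders of $\pi$ among set partitions of $[(d+1)n]$. Hence
\[
    S_n^d(\sigma_1, \ldots, \sigma_d) \;\le\; B_{(d+1)n}(\pi) \;\le\; c^{(d+1)n}\bigl((d+1)n\bigr)^{(d+1)n(1-1/d)}.
\]
Since $(d+1)n \cdot (d-1)/d = n(d^2-1)/d$, expanding $((d+1)n)^{n(d^2-1)/d} = (d+1)^{n(d^2-1)/d} \cdot n^{n(d^2-1)/d}$ and folding the $(d+1)$ factors into the exponential base gives the stated $c_2^n n^{n(d^2-1)/d}$.

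\textbf{Lower bound.} This is the substantive direction, and requires an explicit construction yielding $\gtrsim c_1^n n^{n(d^2-1)/d}$ parallel-avoiders. The target exponent splits as $n(d-1) + n(d-1)/d$, which suggests a two-layer family: let $d-1$ of the coordinates be completely free (contributing $(n!)^{d-1}\approx n^{n(d-1)}$ tuples), and constrain the remaining coordinate $\sigma'_i$ to respect a partition of $[n]$ into blocks of size $k=\lfloor n^{(d-1)/d}\rfloor$. Within the blocks, $\sigma'_i$ may be freely permuted, giving $(k!)^{n/k}\approx n^{n(d-1)/d}$ further choices. The block action of $\sigma'_i$ is chosen to pin every cross-block $m$-subpattern to some fixed $\rho \in S_m$ with $\rho \ne \sigma_i$; since $m > 1$ and so $|S_m|\ge 2$, such a $\rho$ can be found for an appropriate $i$. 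Under this construction, any candidate occurrence of $(\sigma_1,\ldots,\sigma_d)$ whose indices span multiple blocks is ruled out at the $i$th coordinate.

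The main obstacle is verifying avoidance uniformly in the pattern. The block construction only controls $m$-subsets lying in distinct blocks; occurrences contained in a single block (of size $k\ge m$) must be ruled out by an additional layer of structure. I expect this to be handled by pinning the within-block behavior of $\sigma'_i$ using Marcus-Tardos (Theorem \ref{StanleyWilf}) inside each block at a mere exponential cost, which leaves the polynomial factor $n^{n(d-1)/d}$ intact. A short case analysis on $(\sigma_1,\ldots,\sigma_d)$ should identify the coordinate $i$ and the pattern $\rho$ for which this works cleanly, after which the product of the free contribution $(n!)^{d-1}$ and the constrained contribution $(k!)^{n/k}$ matches the target $n^{n(d^2-1)/d}$ up to the desired exponential factor.
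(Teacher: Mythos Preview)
Your upper bound is correct and matches the paper's argument exactly.

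The lower bound has a genuine gap. With $\sigma'_1,\ldots,\sigma'_{d-1}$ ranging freely over all of $S_n$, the single constrained coordinate $\sigma'_i$ must by itself rule out every occurrence of the pattern if you want every tuple in your family to be an avoider. In particular, for indices $c_1<\cdots<c_m$ all lying in one block $B$, the free coordinates can (and, for generic choices, will) realise $\sigma_1,\ldots,\sigma_{i-1},\sigma_{i+1},\ldots,\sigma_d$ simultaneously on those indices, so $\sigma'_i|_B$ is forced to avoid $\sigma_i$ as an ordinary permutation pattern. Marcus--Tardos then gives at most $c^k$ choices per block, hence $(c^k)^{n/k}=c^n$ choices for $\sigma'_i$ in total---not $(k!)^{n/k}\approx n^{n(d-1)/d}$. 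Your ``mere exponential cost'' is therefore actually a factor of roughly $(k/ec)^n$ with $k=n^{(d-1)/d}$, which is super-exponential in $n$; the construction ends up producing only about $c^n(n!)^{d-1}\sim c^n n^{n(d-1)}$ avoiders, short of the target by precisely the factor $n^{n(d-1)/d}$ the block trick was meant to supply. (The case of indices spread over more than one but fewer than $m$ blocks is also not addressed, though this is vacuous when $m=2$.)

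The paper's lower bound proceeds entirely differently and does not attempt an explicit construction. It first reduces to the single tuple $(12,\ldots,12)$: restricting each $\sigma_j$ to its first two positions gives $S_n^d(\sigma_1,\ldots,\sigma_d)\ge S_n^d(\sigma_1'',\ldots,\sigma_d'')$ with each $\sigma_j''\in\{12,21\}$, and coordinatewise complementation then shows $S_n^d(\sigma_1'',\ldots,\sigma_d'')=S_n^d(12,\ldots,12)$. Now $(\sigma'_1,\ldots,\sigma'_d)$ avoids $(12,\ldots,12)$ exactly when the $n$ points $(i,\sigma'_1(i),\ldots,\sigma'_d(i))$ form an antichain in the product order on $[n]^{d+1}$, i.e., when the random $(d{+}1)$-dimensional order $P_{d+1}(n)$ is an antichain. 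The paper cites Brightwell and Crane--Georgiou for the estimate $\Pr[P_{d+1}(n)\text{ is an antichain}]\ge (1/e+o(1))^n n^{-n/d}$; multiplying by $(n!)^d$ yields the claimed $c_1^n n^{n(d^2-1)/d}$.
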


\section{Proof of Theorem \ref{lowerbound}}
We will now prove Theorem \ref{lowerbound}.

Let $\pi$ be a set partition with $\text{pm}(\pi)=d$. Assume $d>1$, as the case $d=1$ is trivial. By the interval criterion for permutability, removing blocks containing one element from $\pi$ does not change its permutability (as it preserves intervals containing exactly one element from each set). Thus, if $\pi'$ is $\pi$ with all one-element blocks removed, any partition avoiding $\pi'$ must avoid $\pi$ since $\pi$ contains $\pi'$, so $B_n(\pi)\geq B_n(\pi')$ and $\text{pm}(\pi')=d$. So it suffices to show the problem for $\pi'$; that is, we can assume without loss of generality that $\pi$ has no blocks of size $1$. This means that we can add any blocks of size $1$ to a partition of $[n-i]$ avoiding $\pi$ to get a partition of $[n]$ avoiding $\pi$. If we only range over partitions of $[n-i]$ with no blocks of size $1$, the resulting partitions will all be distinct. Let $B'_n([\pi])$ be the number of partitions of $[n]$ avoiding $[\pi]$ with no blocks of size $1$. Then since we can perform the process of adding single blocks in $\binom{n}{i}$ ways, we have $B_n(\pi)\geq\binom{n}{i}B'_{n-i}(\pi)$.

Now suppose $n$ is a multiple of $d$, $n=dm$. Then if $\sigma_1,\ldots,\sigma_{d-1}\in S_m$ are permutations, then $[\sigma_1,\ldots,\sigma_{d-1}]$ will be a partition of $[m(d-1+1)]=[n]$, and by the definition of permutability, it must avoid $\pi$. Since these all correspond to different partitions, and all blocks have size $d>1$, we can count them to see that
\[B'_n(\pi)\geq m!^{d-1}=\left(\frac{n}{d}\right)!^{d-1}.\]
By Stirling Approximation, there is $c>0$ such that $\left(\frac{n}{d}\right)!>c^{\frac{n}{d}}\left(\frac{n}{d}\right)^{\frac{n}{d}}=\left(\frac{c}{d}\right)^{\frac{n}{d}}n^{\frac{n}{d}}$. Substituting this in,
\[B'_n(\pi)\geq\left(\frac{c}{d}\right)^{\frac{(d-1)n}{d}}n^{\frac{(d-1)n}{d}}=c_0^nn^{n\left(1-\frac{1}{d}\right)},\]
where $c_0=\left(\frac{c}{d}\right)^{\frac{d-1}{d}}$.

Now we use this to solve the case where $d\nmid n$. Let $n=dm+i$, $0\leq i\leq d-1$. Since we are dealing with asymptotics we may assume that $n>d$. We have that since $n-i$ is a multiple of $d$, assuming $c_0<1$ without loss of generality for ease of manipulation,
\begin{align*}
B_n(\pi) & \geq\binom{n}{i}B'_{n-i}(\pi) \\ & \geq\binom{n}{i}c_0^{n-i}(n-i)^{(n-i)\left(1-\frac{1}{d}\right)} \\ & \geq\frac{(n-i)^i}{i!}c_0^{n-i}(n-i)^{(n-i)\left(1-\frac{1}{d}\right)} \\ & =\frac{c_0^n}{c_0^ii!}(n-i)^{(n-i)\left(1-\frac{1}{d}\right)+i} \\ & =\frac{c_0^n}{c_0^ii!}(n-i)^{n\left(1-\frac{1}{d}\right)+\frac{i}{d}} \\ & \geq\frac{c_0^n}{c_0^ii!}(n-i)^{n\left(1-\frac{1}{d}\right)} \\ & =\frac{c_0^n}{c_0^ii!}\left(1-\frac{i}{n}\right)^{n\left(1-\frac{1}{d}\right)}n^{n\left(1-\frac{1}{d}\right)} \\ & >\frac{c_0^n}{d!}\left(1-\frac{d}{n}\right)^{n\left(1-\frac{1}{d}\right)}n^{n\left(1-\frac{1}{d}\right)}.
\end{align*}
Since $\left(1-\frac{d}{n}\right)^n$ is positive for $n\in [k+1,\infty]$ and limits to $e^{-k}\neq 0$ as $n\to\infty$, it must have a minimum, call it $a$, on $n\in [d+1,\infty]$. Substituting this in and noting $a<1$,
\begin{align*}
B_n(\pi) & >\frac{c_0^n}{d!}\left(1-\frac{d}{n}\right)^{n\left(1-\frac{1}{d}\right)}n^{n\left(1-\frac{1}{d}\right)} \\ & =\frac{c_0^n}{d!}\left(\left(1-\frac{d}{n}\right)^n\right)^{\left(1-\frac{1}{d}\right)}n^{n\left(1-\frac{1}{d}\right)} \\ & \geq\frac{c_0^n}{d!}a^{\left(1-\frac{1}{d}\right)}n^{n\left(1-\frac{1}{d}\right)} \\ & \geq\frac{c_0^na}{d!}n^{n\left(1-\frac{1}{d}\right)} \\ & >\left(\frac{c_0a}{d!}\right)^nn^{n\left(1-\frac{1}{d}\right)} \\ & =c_1^nn^{n\left(1-\frac{1}{d}\right)},
\end{align*}
where $c_1=\frac{c_0a}{k!}$. This concludes the proof of the theorem.

\section{Ordered Hypergraph Pattern Avoidance}
We start this section by defining ordered hypergraph pattern avoidance.
\begin{defn}
Let $G$ and $H$ be hypergraphs whose vertex sets are totally ordered. Then $G$ \emph{contains} $H$ if there exists both an order-preserving injection $V(H)\to V(G)$ and an injection $E(H)\to E(G)$ such that the two are compatible--that is, if $E\in E(H)$ is sent to $E'\in E(G)$, then every vertex of $E$ is sent to a vertex of $E'$ under the map of vertices (note that this map $V(E)\to V(E')$ need not be surjective). If $G$ does not contain $H$, we as usual say that $G$ \emph{avoids} $H$.
\end{defn}
\begin{defn}
The \emph{weight} of a hypergraph $G$, denoted $i(G)$, is the sum of the sizes of all edges in $G$, $\displaystyle\sum_{E\text{ an edge of G}}|E|$. We will denote by $e(G)$ the number of edges in $G$.
\end{defn}
We also define a $d$-permutation hypergraph.
\begin{defn}
A \emph{$d$-permutation hypergraph} is a hypergraph $G$ on the vertex set $[kd]$ for some $k\in\mathbb{Z}^+$, such that the following properties are satisfied.
\begin{itemize}
\item
$G$ has $k$ edges, each of size $d$, such that each vertex is in exactly one edge.
\item
Each edge has exactly one vertex from each of $\{1,\ldots,k\}$, $\{k+1,\ldots,2k\}$, \ldots, and $\{(d-1)k+1,\ldots,dk\}$.
\end{itemize}
\end{defn}
In Section 2 of \cite{KM} and independently as Lemma $14$ of \cite{BBM}, the following generalization of the F\"{u}redi-Hajnal conjecture \cite{FH} (which occurs when $G$ is bipartite and was proved by Marcus and Tardos \cite{StanleyWilf}) was proven.
\begin{thm}\label{thm:km}[Balogh-Bollob\'as-Morris \cite{BBM}, Klazar-Marcus \cite{KM}]
Let $H$ be a fixed $2$-permutation hypergraph. Then for any $n\in\mathbb{Z}^+$ and hypergraph $G$ on $[n]$ avoiding $H$, $i(G)=O(n)$.
\end{thm}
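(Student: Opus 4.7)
The plan is to adapt the block-decomposition argument of Marcus and Tardos \cite{StanleyWilf} from $0$-$1$ matrices to ordered hypergraphs, proving by strong induction on $n$ that there exists a constant $c = c(H)$ with $i(G) \le c n$ whenever the hypergraph $G$ on $[n]$ avoids $H$. As a preliminary reduction, edges of size $1$ together contribute at most $n$ to $i(G)$, so I may assume every edge of $G$ has size at least $2$.

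Fix a large constant $t = t(k)$, partition $[n]$ into $t$ consecutive blocks $B_1,\ldots,B_t$ of size $\lceil n/t\rceil$, and classify each edge $E$ of $G$ by a bounded-size \emph{signature} recording the set $\tau(E)\subseteq [t]$ of blocks it meets together with a finite amount of intersection data (for instance, whether $|E\cap B_i|$ is one or more, and a constant amount of information about the extremal vertices of $E$ in each block it touches). Edges contained in a single block contribute $\sum_i i(G|_{B_i})$, which is at most $t\cdot c(n/t)$ by the inductive hypothesis applied to each $G|_{B_i}$ (still avoiding $H$). The remaining contribution comes from \emph{crossing} edges (those with $|\tau(E)|\ge 2$). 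Here the key observation is that collapsing each block to a super-vertex produces a coarse ordered hypergraph on $[t]$ that itself avoids $H$: any copy of $H$ in the coarse hypergraph lifts to one in $G$ by choosing concrete vertices, in the correct order, from each specified block, using the finer data stored in the signature. Since $t$ is a constant, a block-level application of the theorem (or direct enumeration) bounds the number of crossing edges of each refined signature by a constant depending only on $H$.

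The main obstacle is converting this bound on the \emph{number} of crossing edges of each signature into a bound on their total \emph{weight}, since a single crossing edge can span up to $n$ vertices. The resolution is to make the signature fine enough -- tracking $|E\cap B_i|$ at a sufficient level of granularity, or subdividing blocks further -- so that pattern avoidance applied inside each block limits how many vertices any one signature class can accumulate; in particular, if too many crossing edges of the same refined signature share many vertices within some block, one extracts a copy of $H$ by combining them with edges in the other blocks. This gives a per-signature weight bound of $O(n)$, and summing over the constantly many signatures yields a crossing contribution of $Cn$ for some $C = C(H)$. Choosing $t$ and $c$ so that the recursion $i(G)\le t\cdot c(n/t) + Cn \le cn$ closes (as in Marcus-Tardos) then completes the induction and gives $i(G)=O(n)$.
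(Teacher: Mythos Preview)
The paper does not give its own proof of this theorem: it is quoted as a known result from \cite{BBM} and \cite{KM}, and in fact the paper's proof of the more general Theorem~\ref{hypthm} explicitly falls back on Theorem~\ref{thm:km} as a black box in the $d=2$ case, precisely because the recursion obtained there, $g(n)\le (k-1)s\cdot g(\lceil n/s\rceil)+O(n)$, does not close when $d=2$. So there is no in-paper argument to compare against, and I evaluate your sketch on its own merits.

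Your recursion cannot close. You apply the inductive hypothesis to each block $B_i$ separately, obtaining $\sum_i i(G|_{B_i})\le t\cdot c(n/t)=cn$, and then add a further $Cn$ for the crossing edges. The resulting bound $i(G)\le cn+Cn$ exceeds $cn$ for any $C>0$, so no choice of $t$ and $c$ makes ``$t\cdot c(n/t)+Cn\le cn$'' hold. This is not how the Marcus--Tardos recursion is structured: there one recurses on the \emph{contracted} object on $n/s$ vertices (for a fixed constant $s$), and the recursive term carries a multiplicative factor $\alpha$ strictly smaller than $s$, so that $\alpha\cdot c(n/s)+Cn\le cn$ can be arranged by taking $c$ large once $s>\alpha$. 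In your outline the coarse hypergraph lives on the constant vertex set $[t]$ and is used only to bound the crossing contribution; it never enters the recursion, so the inductive saving is simply absent.

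Separately, the ``make the signature fine enough'' paragraph is not a proof. You need a concrete mechanism forcing the total weight of crossing edges of a fixed signature to be $O(n)$, and the sentence ``if too many crossing edges of the same refined signature share many vertices within some block, one extracts a copy of $H$'' is essentially the entire combinatorial content of the Klazar--Marcus argument, asserted rather than carried out. Even if one grants that step, the structural flaw in the recursion above would remain.
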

This was a key lemma in the proof of the $\text{pm}(\pi)=1$ case of Theorem \ref{mainthm}. We prove the following generalization of this result to deduce Theorem \ref{mainthm} from it.
\begin{thm}\label{hypthm}
Let $H$ be a fixed $d$-permutation hypergraph. Then for any $n\in\mathbb{Z}^+$ and hypergraph $G$ on $[n]$ avoiding $H$, $i(G)=O(n^{d-1})$.
\end{thm}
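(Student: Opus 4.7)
I would prove Theorem~\ref{hypthm} by induction on $d$. The base case $d=2$ is exactly Theorem~\ref{thm:km}, so assume $d \geq 3$ and that the statement holds for $d-1$, and fix $G$ on $[n]$ avoiding a $d$-permutation hypergraph $H$ on $[kd]$ with $k$ edges.

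A preliminary reduction: any edge of $G$ of size smaller than $d$ cannot host an edge of $H$, so discarding it does not affect avoidance. The total weight of such edges is at most $\sum_{j=1}^{d-1} j\binom{n}{j}=O(n^{d-1})$, within budget, so I may assume every edge of $G$ has size at least $d$.

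The main argument is a Marcus--Tardos-style block decomposition. I would partition $[n]$ into $t$ consecutive blocks $B_1,\ldots,B_t$ of size $s$ (with $s$ and $t$ to be chosen), and split $E(G)$ into internal edges (contained in a single block) and crossing edges (touching at least two). The theorem applies recursively to each $G|_{B_i}$ for the internal contribution. For the crossing part, form a compressed multi-hypergraph $\bar G$ on $[t]$ with one edge $\sigma(e)=\{i:e\cap B_i\neq\varnothing\}$ per crossing edge $e$. The key lifting claim is: if $G$ avoids $H$, then $\bar G$, viewed as a multi-hypergraph so that $k$ distinct edge-copies of $\bar G$ are required, also avoids $H$; from any embedding of $H$ into $\bar G$ one can pick, inside each designated block, an actual vertex of the relevant original edge. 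Since $|e|\leq s\,|\sigma(e)|$ for every crossing edge, the crossing contribution to $i(G)$ is at most $s\cdot i(\bar G)$.

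The principal difficulty is extracting a genuine gain from avoidance: bounding $i(\bar G)\le f(t)$ only reproduces the recursion with no improvement. To obtain a true reduction of dimension, I would analyze $\bar G$ via the inductive hypothesis at level $d-1$. Concretely, for each vertex $v\in[t]$ I would consider the link of $v$ in $\bar G$, carve out a sub-structure that avoids a $(d-1)$-permutation hypergraph derived from $H$ by peeling off one layer, and apply the inductive bound to it. Summing over $v$ should then yield $i(\bar G)=O(t^{d-1})$, and the recursion closes to give $i(G)=O(n^{d-1})$. The subtle point, and I expect the main technical obstacle, is that the link of a single vertex in a $d$-permutation pattern does not literally form a $(d-1)$-permutation pattern (in $H$ the edges are disjoint, while a vertex link produces $k$ edges sharing an apex), so this reduction will require an additional pigeonhole or Ramsey-type argument to reintroduce disjointness among the representatives chosen in each block.
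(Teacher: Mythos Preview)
The block-decomposition and compression to $\bar G$ are in the spirit of the paper's argument, but the proposal has a real gap at the dimension-reduction step, and you name it yourself. The link of a vertex $v$ in $\bar G$ has no reason to avoid any $(d-1)$-permutation hypergraph: if the edges through $v$ project (after deleting $v$) to a copy of some $(d-1)$-permutation pattern, those original edges all meet the block of $v$, so they are not pairwise disjoint and do \emph{not} witness containment of $H$ in $G$. The link can therefore be arbitrarily rich without violating avoidance of $H$, and the hoped-for ``additional pigeonhole or Ramsey-type argument to reintroduce disjointness'' is not a patch but the entire content of the proof. A second, smaller gap: $\bar G$ is a multi-hypergraph, and you never explain how to control edge multiplicities; without this, ``$i(\bar G)\le f(t)$'' has no meaning for the extremal function $f$ you are trying to bound.

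The paper resolves the main difficulty with a different mechanism. It first proves a strengthened statement for $t$-uniform hypergraphs (Lemma~\ref{tuniform}): if a $t$-uniform $G$ on $[n]$ has more than $c_{t,d,k}n^{d-1}$ edges, then some projection $\mathrm{Proj}_J G$ onto $d$ of the $t$ coordinate positions contains \emph{every} $d$-permutation hypergraph on $kd$ vertices, not just $H$. This is proved by a triple induction on $t$, $d$, $n$. Projections, unlike vertex links, decrement uniformity without forcing edges to share a vertex, and the strengthening to ``every'' is exactly what lets the step from $d$ to $d-1$ close: one pigeonholes over where all $(d-1)$-permutation patterns sit inside the ``wide'' blocks of a fixed blockcolumn, finds $k$ blocks that agree on these locations but occupy $k$ distinct intervals in one additional coordinate, and assembles from them a disjoint copy of any target $d$-pattern. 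The passage to non-uniform $G$ is then comparatively routine: handle each edge size $t\le (kd-1)s$ separately via the lemma, control multiplicity in the compressed graph by noting that $k$ repeats of a compressed edge of size $\ge kd$ already contain $H$, and solve the resulting recursion $g(n)\le (k-1)s\,g(\lceil n/s\rceil)+O(n^{d-1})$, which closes for $d\ge 3$ with any fixed $s>k-1$.
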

Our proof most resembles the respective proof in \cite{MP} but very likely the methods of \cite{BBM} and \cite{KM} could be modified in a similar fashion.

\section{Proof of Theorem \ref{hypthm}}
We will first show Theorem \ref{hypthm} in the case where $G$ is $t$-uniform for a fixed $t$. In fact, we will prove something stronger by induction. First, we need to define the {\em projection} of a $t$-uniform hypergraph.
\begin{defn}
Let $G$ be a $t$-uniform ordered hypergraph, and let $\J$ be a subset of $[t]$ of cardinality $a$. For an edge $\e E\in G$, let $\Proj_{\J} \e E$ be the hyperedge of cardinality $t-a$ given by deleting the $i^{th}$ vertex of $\e E$ for all $i\in {\J}$. Let $\Proj_{\J} G$ be the $(t-a)$-uniform hypergraph given by the same vertex set as $G$ and the edges $\Proj_{\J} \e E$ for all edges $\e E\in G$ (only counting multiple edges once).
\end{defn}

\begin{obs}\label{obs:proj} If $G$ is $t$-uniform, $G$ contains $\Proj_{\J} G$ for any ${\J}\subset [t]$.
\end{obs}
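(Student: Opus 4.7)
The plan is to exhibit explicit maps on vertices and edges witnessing the containment, using the identity on vertices and a section of the projection on edges.

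First I would set $V(\Proj_\J G) = V(G)$ as prescribed by the definition and take the vertex map $\varphi : V(\Proj_\J G) \to V(G)$ to be the identity; this is trivially an order-preserving injection since the two totally ordered vertex sets coincide. Next, for each edge $e$ of $\Proj_\J G$, the definition guarantees the existence of at least one edge $\e E \in E(G)$ with $\Proj_\J \e E = e$ (possibly several, since collapsing duplicates is what produces $E(\Proj_\J G)$ from the multiset of projections). Choose one such preimage for each $e$, and let $\psi$ denote the resulting map $E(\Proj_\J G) \to E(G)$; distinct projected edges have distinct images under $\psi$ because they are distinct subsets of $V(G)$ arising from different preimages, so $\psi$ is an injection.

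It remains to verify compatibility of $\varphi$ and $\psi$. If $e \in E(\Proj_\J G)$ is sent to $\psi(e) = \e E \in E(G)$, then by construction $e = \Proj_\J \e E$ is obtained from $\e E$ by deleting exactly the vertices occupying positions in $\J$, so every vertex of $e$ is a vertex of $\e E$. Since $\varphi$ is the identity, each vertex of $e$ is indeed sent to a vertex of $\e E = \psi(e)$, which is exactly the compatibility condition in the definition of containment.

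There is no substantive obstacle: the only point requiring any care is the observation that $\psi$ is well-defined as an injection, which is immediate from the fact that $E(\Proj_\J G)$ is defined as a set (with duplicates collapsed) of subsets of $V(G)$ and that each such subset has at least one preimage under projection. The statement is essentially a matter of unwinding the definitions.
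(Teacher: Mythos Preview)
Your proof is correct and is exactly the natural unwinding of the definitions; the paper itself states this as an observation without proof, so your argument is essentially what the authors have in mind. The only slightly muddled sentence is your justification of injectivity of $\psi$: the clean reason is that if $\psi(e_1)=\psi(e_2)=\e E$ then $e_1=\Proj_{\J}\e E=e_2$, but your conclusion is right.
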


Our proof is quite long and uses several projections; this makes it sometimes quite confusing to recall whether a hypergraph is $t$-uniform, $d$-uniform or $(d-1)$-uniform etc.
Because of this, throughout the statement and proof of the next lemma, we use the following notational conventions.
We denote $t$-uniform hypergraphs with a bold letter $\b G$ (and possible further indices), $(t-1)$-uniforms with a normal letter $G$, $d$-uniform hypergraphs with a letter $\b H$ (or, at a later part, one will be $\G$) and $(d-1)$-uniform hypergraphs with a letter $H$.
Similar rules are used for hyperedges ($\be E$ means size $t$ or $d$, while $\e E$ means size $t-1$ or $d-1$), 
and projections; so $\bProj_{\J} G$ would project to a $d$-uniform hypergraph, while $\Proj_{\bJ} G$ to a $(d-1)$-uniform hypergraph from the same $G$.

Our strengthening of Theorem \ref{hypthm} for $t$-uniform hypergraphs is the following.
\begin{lemma}\label{tuniform}
Fix $t,d,k\in\mathbb{Z}^+$. Then there exists a constant $c_{t,d,k}$ such that for all $n$ and all $t$-uniform hypergraphs $\b G$ on $[n]$ with $e(\b G)>c_{t,d,k}n^{d-1}$, there exists $\bJ\subset [t]$ with $|\bJ|=t-d$ such that $\bProj_{\bJ} \b G$ contains every $d$-permutation hypergraph on $kd$ vertices.
\end{lemma}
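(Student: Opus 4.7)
The plan is to prove Lemma \ref{tuniform} by induction on $t$, holding $d$ and $k$ fixed throughout.

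For the \emph{base case} $t = d$, $\bJ$ must be empty, and the claim reduces to asserting that any $d$-uniform ordered hypergraph $\b G$ on $[n]$ with more than $c_{d,d,k} n^{d-1}$ edges already contains every $d$-permutation hypergraph on $kd$ vertices. This is the direct $d$-dimensional extension of Theorem \ref{thm:km}. I would prove it by a Marcus--Tardos-style block decomposition: partition $[n]$ into $N$ equal intervals, with $N$ a constant depending only on $k$ and $d$, and for each edge of $\b G$ record the non-decreasing $d$-tuple of intervals containing its vertices. Pigeonholing over the finitely many ``transversal patterns'' (those using $d$ distinct intervals) identifies a $d$-tuple of intervals supporting a dense $d$-partite $d$-uniform subhypergraph of $\b G$, in which an inner recursion parallel to the original Marcus--Tardos argument extracts the target $d$-permutation hypergraph on $kd$ vertices.

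For the \emph{inductive step} $t > d$, assume the lemma for $(t-1, d, k)$ and try to identify a coordinate $i \in [t]$ to include in $\bJ$. For each $i$, write $G_i := \bProj_{\{i\}} \b G$ for the single-coordinate $(t-1)$-uniform projection. If some $G_i$ satisfies $e(G_i) > c_{t-1, d, k} n^{d-1}$, the inductive hypothesis applied to $G_i$ furnishes some $\bJ' \subset [t-1]$ with $|\bJ'| = (t-1) - d$ for which $\bProj_{\bJ'} G_i$ contains every $d$-permutation hypergraph on $kd$ vertices; taking $\bJ$ to be $\{i\}$ together with the image of $\bJ'$ in $[t] \setminus \{i\}$ then suffices, since projections commute. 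If instead every $G_i$ is thin, then many edges of $\b G$ must collapse under each single-coordinate projection. In that event I would pass to a heavy fiber: for some $(i, v)$, the slice of edges of $\b G$ whose $i$-th smallest vertex equals $v$ must be sufficiently large, and viewed as a $(t-1)$-uniform hypergraph on $[n] \setminus \{v\}$ it supplies the inductive input.

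The \emph{main obstacle} is preserving the $n^{d-1}$ scaling through the inductive step. Naive one-coordinate projection loses a factor of $n$ per step, which would drive the bound up to $n^{t-1}$; even an application of Shearer's inequality across all $\binom{t}{d}$ $d$-coordinate projections only lower-bounds the largest projection by $n^{(d-1)d/t}$, which falls short of $n^{d-1}$ as soon as $t > d$. The crux of the argument is therefore the slice/fiber dichotomy above, namely showing that when every single-coordinate projection $G_i$ is thin, some slice $\b G_{i,v}$ must be disproportionately heavy and so itself dense enough to invoke the inductive hypothesis. Calibrating the constants $c_{t,d,k}$ in terms of $c_{t-1,d,k}$ so that this dichotomy cleanly closes is precisely where the methods of \cite{MP} are brought to bear and where the most delicate bookkeeping will be required.
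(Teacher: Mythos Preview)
Your inductive step has a genuine gap. The slice/fiber dichotomy does not close: when every single-coordinate projection $G_i = \bProj_{\{i\}}\b G$ is thin (at most $c_{t-1,d,k}n^{d-1}$ edges), you propose to find a heavy slice $\b G_{i,v}$ (edges whose $i$-th vertex is $v$) and apply the $(t-1)$-case to it. But for any fixed $i$ the slices partition $E(\b G)$, so $\sum_v e(\b G_{i,v}) = e(\b G)$; the heaviest slice is therefore only guaranteed to have about $n^{d-2}$ edges, a full factor of $n$ short of the $c_{t-1,d,k}n^{d-1}$ needed to invoke induction. Nothing in ``all $G_i$ thin'' forces any slice to beat this average by a factor of $n$: thinness of $G_i$ says that the \emph{fibers} of $\Proj_{\{i\}}$ (sets of edges sharing a common projection) are large on average, which is a statement about edges agreeing in $t-1$ coordinates and has no bearing on the slices $\b G_{i,v}$. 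You correctly flag this as the main obstacle, but then defer its resolution to ``the methods of \cite{MP},'' which is precisely where all the content lies.

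The paper's proof is organized quite differently. It runs a \emph{triple} induction on $(t,d,n)$ with base cases $t<d$ and $d=1$, not $t=d$. In the inductive step it partitions $[n]$ into intervals of a fixed size $s$ and performs a three-way split: edges with two vertices in one interval are handled by induction on $t$ (after a single projection); the remaining edges are grouped into \emph{blocks} by their interval pattern, and each block is either \emph{thin} (bounded by the inductive hypothesis on $d$, i.e.\ the $(d-1)$-case) or \emph{$\bJ$-wide} (counted directly by a pigeonhole argument over $\bJ$-blockcolumns); finally, the number of nonempty thin blocks is bounded by contracting intervals to points and invoking induction on $n$. The induction on $d$, entirely absent from your scheme, is what keeps the $n^{d-1}$ scaling intact. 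Your proposed base case $t=d$ is in fact the $d$-uniform instance of the theorem, and the paper obtains it not by a separate Marcus--Tardos argument but as a consequence of this same triple induction.
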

If $\b G$ avoids $\b H$, then by the above Observation $\bProj_{\bJ} \b G$ must also avoid $\b H$ for all ${\bJ}$, and since $i(\b G)=t\cdot e(\b G)$, Lemma \ref{tuniform} is indeed a strengthening of Theorem \ref{hypthm} for $t$-uniform hypergraphs.
\begin{proof}[Proof of Lemma \ref{tuniform}]
The proof will be induction on $t,d,n$ (while $k$ is fix).

The base cases of $t<d$ or $d=1$ are simple. If $t<d$, then $e(\b G)\leq\binom{n}{t}\leq n^t\leq n^{d-1}$. If $d=1$, then, by the definition of avoidance, if the conclusion does not hold, $\bProj_{\bJ} \b G$ must have less than $k$ edges for all ${\bJ}$ of size $t-1$, where $k$ is the number of edges (which in this case just consist of a single vertex) of $\b H$. That is, for any $i\in [t]$, there are only $k-1$ choices for the $i^{th}$ vertex of the edges of $\b G$. Thus, $e(\b G)\leq (k-1)^t=O(1)$, as desired.

We now proceed to the inductive step.
Suppose that $\b G$ is a $t$-uniform hypergraph on vertex set $[n]$ that does not satisfy the conclusion of the lemma (that is, there is no ${\bJ}$ that satisfies the conditions of the lemma). We wish to show that $e(\b G)=O(n^{d-1})$.

Now, for some positive integer $s$ (which will be potentially large, but fixed independently of $n$), divide the vertices of $\b G$ (that is, the set $[n]$) up into {\em intervals} of size $s$, with the remainder in another interval (that is, our intervals are $\{1,\ldots,s\},\{s+1,\ldots,2s\},\ldots,\{\left(\left\lfloor\frac{n}{s}\right\rfloor-1\right)s+1,\ldots,\left\lfloor\frac{n}{s}\right\rfloor s\},\{\left\lfloor\frac{n}{s}\right\rfloor s+1,\ldots,n\}$). Call these intervals $I_1,\ldots,I_{\left\lceil\frac{n}{s}\right\rceil}$.

Suppose $\be E$ is an edge which has at least two vertices in the same interval. Then let $f(\be E)$ be the smallest $i$ such that the $i^{th}$ and $(i+1)^{st}$ vertices of $\be E$ lie in the same interval. Let $\b G_0$ be the graph on $V(\b G)=[n]$ containing exactly the edges of $\b G$ which have at least two vertices in the same interval.

\begin{prop} $e(\b G_0)\leq c_{t-1,d,k}(t-1)(s-1)n^{d-1}=O(n^{d-1})$.
\end{prop}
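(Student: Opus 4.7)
The plan is to bound each $\b G_0^i := \{\be E \in \b G_0 : f(\be E) = i\}$ separately for $i \in [t-1]$, and then sum over $i$. This gives the factor of $t-1$ immediately, so it remains to show $e(\b G_0^i) \leq (s-1)c_{t-1,d,k}n^{d-1}$.

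For each fixed $i$, I would project out the $(i+1)$-st coordinate: let $G^i := \bProj_{\{i+1\}} \b G_0^i$, a $(t-1)$-uniform hypergraph on $[n]$. The key combinatorial observation is that the map $\be E \mapsto \Proj_{\{i+1\}} \be E$ is at most $(s-1)$-to-one on $\b G_0^i$: given a projected edge $\e E'$, the original edge $\be E$ is reconstructed by choosing the $(i+1)$-st vertex, which must lie in the same interval $I_j$ as the $i$-th vertex of $\e E'$ and be strictly larger than it, giving at most $s-1$ choices. Hence
\[e(\b G_0^i) \leq (s-1)\cdot e(G^i).\]

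The second step is to apply the inductive hypothesis at $t-1$ to $G^i$. For this I need to verify that $G^i$ also fails the conclusion of Lemma \ref{tuniform}, i.e.\ that no $\bJ' \subset [t-1]$ with $|\bJ'|=t-1-d$ makes $\bProj_{\bJ'} G^i$ contain every $d$-permutation hypergraph on $kd$ vertices. Suppose some such $\bJ'$ did; let $\bJ \subset [t]$ be the $(t-d)$-element set obtained by reindexing $\bJ'$ around position $i+1$ and adjoining $i+1$ itself. Then composition of projections gives $\bProj_{\bJ'} G^i \subseteq \bProj_{\bJ} \b G$ as sub-hypergraphs on $[n]$, so $\bProj_{\bJ} \b G$ would also contain every such $d$-permutation hypergraph, contradicting our standing assumption that $\b G$ fails the conclusion of the lemma. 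Thus induction on $t$ applies and yields $e(G^i) \leq c_{t-1,d,k}n^{d-1}$.

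Combining the two steps gives $e(\b G_0^i) \leq (s-1)c_{t-1,d,k}n^{d-1}$, and summing over $i \in [t-1]$ gives the claimed bound. I do not expect a serious obstacle here: the argument is essentially a shift of index plus the pigeonhole bound $s-1$ per interval. The only subtlety worth checking carefully is the bookkeeping in the previous paragraph — that a projection of a projection is a projection, and that projecting a subgraph yields a subgraph of the projection — which is what lets the induction hypothesis pass from $\b G$ down to $G^i$ without loss.
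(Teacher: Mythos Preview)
Your proof is correct and follows essentially the same approach as the paper: project out the $(i+1)$-st vertex, use the at-most-$(s-1)$-to-one observation, and apply the inductive hypothesis on $t$ via the composition-of-projections argument. The only cosmetic difference is that the paper pigeonholes to a single $i_0$ with at least $e(\b G_0)/(t-1)$ edges and argues once, whereas you partition by $f$-value and sum over all $i\in[t-1]$; these yield the identical bound.
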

\begin{proof}
Since $f(\be E)\in [t-1]$ for all edges $\be E\in \b G_0$, by the Pigeonhole Principle, at least $\frac{e(\b G_0)}{t-1}$ edges of $\b G_0$ must map to the same number under $f$. Let $\b G_1$ be a graph on $[n]$ with at least $\frac{e(\b G_0)}{t-1}$ edges all of which map to the same $i_0\in [t-1]$, i.e., $f(\be E)=i_0$ for all $\be E\in \b G_1$. Then for all $\be E\in \b G_1$, by definition, the $1^{st},\ldots,i_0^{th}$ elements of $\be E$ are in different intervals, and the $i_0^{th}$ and $(i_0+1)^{st}$ are in the same interval.

Consider the graph $G_2=\Proj_{\{i_0+1\}}\b G_1 $. Given any edge $\e E_2\in G_2$, it may correspond to multiple edges in $\b G_1 $. But if $\be E_1\in \b G_1 $ corresponds to $\e E_2\in G_2$, all of $\be E_1$'s vertices are determined, except for the $(i_0+1)^{st}$ vertex, which must be in the same interval as the $i_0^{th}$ (which is determined). Thus, there are at most $s-1$ choices for $\be E_1$ given $\e E_2$. So at most $s-1$ edges of $\b G_1 $ can correspond to any given edge of $G_2$, which implies that
\[e(G_2)\geq\frac{e(\b G_1 )}{s-1}\geq\frac{e(\b G_0)}{(t-1)(s-1)}.\]

Since $\b G_1 $ is obtained from $\b G$ by deleting some edges, $\Proj_{i_0+1}\b G$ contains $G_2=\Proj_{i_0+1}\b G_1$. If there is a ${\J}'\subset [t-1]$ with $|{\J}'|=t-d-1$ such that $\bProj_{\J'} G_2$ contains every $d$-permutation hypergraph on $kd$ vertices, then $\bProj_{\J'} \Proj_{i_0+1}\b G$ also contains every $d$-permutation hypergraph on $kd$ vertices. But the composition of two projections is itself a projection, in this case, by some $\bJ\subset [t]$ with $|\bJ|=t-d$. Thus, the conclusion of the lemma holds for $\b G$, contradicting our assumption.

Therefore, there is no $\J'\subset [t-1]$ with $|\J'|=t-d-1$ such that $\bProj_{\J'} G_2$ contains every $d$-permutation hypergraph on $kd$ vertices. By the inductive hypothesis (on $t$), there exists $c_{t-1,d,k}$ such that $e(G_2)\leq c_{t-1,d,k}n^{d-1}$. Thus, $e(\b G_0)\leq (t-1)(s-1)e(G_2)\leq c_{t-1,d,k}(t-1)(s-1)n^{d-1}$.
\end{proof}

Let $\b G'$ be the graph obtained from $\b G$ by removing the edges of $\b G_0$, thus, $\b G'$ contains the edges of $\b G$ all of whose vertices are in distinct intervals. We divide the edges of $\b G'$ into {\em blocks} depending on which intervals the vertices of each edge lie in; that is, $\be E$ and $\be E'$ are in the same block if and only if for all $i\in [t]$, the $i^{th}$ vertex of $\be E$ and $\be E'$ are in the same interval. Thus, there are $\left\lceil\frac{n}{s}\right\rceil^t$ possible blocks (some blocks may contain no edges).

Let $b$ be a block and $\b G_b$ be the graph with just the edges of that block; thus, $E(\b G')=\dot\cup_b E(\b G_b)$. For $\bJ\subset [t]$ with $|\J|=t-d+1$, we say that $b$ is {\em $\bJ$-wide} if $\Proj_{\bJ} \b G_b$  contains $H$ for every $(d-1)$-permutation hypergraph $H$ on $k(d-1)$ vertices. If there is no such $\bJ$, we say that $b$ is {\em thin} and, by the inductive hypothesis, there exists $c_{t,d-1,k}$ (not dependent on $s$) such that $e(\b G_b)\leq c_{t,d-1,k}s^{d-2}$.

Now we will bound the number of $\bJ$-wide blocks. Fix a particular ${\bJ}\subset t$ with $|{\bJ}|=t-d+1$. We partition the blocks into {\em ${\bJ}$-blockcolumns}; two blocks $b$ and $b'$ are in the same ${\bJ}$-blockcolumn if the intervals corresponding to the $i^{th}$ vertices of the edges in $b$ and $b'$ are the same for all $i\in [t]\setminus {\bJ}$. That is, a blockcolumn is obtained by fixing for every $i\in [t]\setminus {\bJ}$ which interval the $i^{th}$ vertex belongs to. Since $|[t]\setminus {\bJ}|=d-1$, there are at most $\binom{\left\lceil\frac{n}{s}\right\rceil}{d-1}$ ${\bJ}$-blockcolumns.

\begin{prop} Every ${\bJ}$-blockcolumn can have at most $(k-1)^{t-d+1}\binom{s}{k}^{(d-1)(k!)^{d-2}}$ blocks that are ${\bJ}$-wide.
\end{prop}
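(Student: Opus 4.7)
The plan is a double pigeonhole argument: first I would reduce to many $\bJ$-wide blocks that share the same embedding data for every $(d-1)$-permutation hypergraph, then extract from these $k$ blocks that differ at a single fixed position of $\bJ$ by $k$ distinct intervals. Concretely, for each $\bJ$-wide block $b$ and each of the $(k!)^{d-2}$ possible $(d-1)$-permutation hypergraphs $H$ on $k(d-1)$ vertices, fix one embedding of $H$ in $\Proj_{\bJ}\b G_b$; such an embedding is determined by the $(d-1)k$ vertices it uses, with $k$ of them in each of the $d-1$ intervals fixed by the blockcolumn, so there are at most $\binom{s}{k}^{(d-1)(k!)^{d-2}}$ possible choices of \emph{witness data} overall. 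If the blockcolumn had more $\bJ$-wide blocks than the asserted bound, then by pigeonhole more than $(k-1)^{t-d+1}$ of them would share identical witness data.

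Each block of a blockcolumn is indexed uniquely by its $(t-d+1)$-tuple of intervals at the positions in $\bJ$, and if every coordinate of such a tuple took at most $k-1$ distinct values across a set of blocks, that set would have size at most $(k-1)^{t-d+1}$. So in our shared-witness subcollection some position $i^*\in\bJ$ would realize at least $k$ distinct intervals $J_1<J_2<\cdots<J_k$, witnessed by $k$ blocks $b_1,\ldots,b_k$. I would then take $\bJ'=\bJ\setminus\{i^*\}$ (so $|\bJ'|=t-d$) and show that $\bProj_{\bJ'}\b G$ contains every $d$-permutation hypergraph $\b H=[\pi_1,\ldots,\pi_{d-1}]$ on $kd$ vertices, contradicting the standing inductive assumption on $\b G$. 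Given any such $\b H$, let $q$ be the rank of $i^*$ among the $d$ elements of $[t]\setminus\bJ'$, and let $H$ be the $(d-1)$-permutation hypergraph naturally obtained by deleting the $q$-th group of $\b H$. The shared witness data provides $k$ common edges $\e F_1,\ldots,\e F_k$ embedding $H$ in every $\Proj_{\bJ}\b G_{b_r}$, and each $\e F_\ell$ lifts to some $\be F^r_\ell\in\b G_{b_r}$ whose vertex at position $i^*$ lies in $J_r$. Choosing the lift from block $b_{\pi_{q-1}(\ell)}$ for each $\ell$ (with $\pi_0=\mathrm{id}$), the $k$ resulting edges under $\bProj_{\bJ'}$ should realize the desired copy of $\b H$.

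The main obstacle will be verifying carefully that this choice yields an order-preserving injection $[kd]\to[n]$ that maps the edges of $\b H$ to the chosen projected edges. The in-group ordering at positions $\tilde p_j\neq i^*$ is inherited from the embedding of $H$, with a small technical wrinkle when $q=1$ where $H$ should be taken as an appropriate conjugate of $[\pi_2,\ldots,\pi_{d-1}]$ instead of the naive deletion of the first coordinate; this is not a real problem since a $\bJ$-wide block contains \emph{every} $(d-1)$-permutation hypergraph on $k(d-1)$ vertices, so whichever conjugate is required is present in the witness data. At position $i^*$, the $\ell$-th chosen edge contributes a vertex in $J_{\pi_{q-1}(\ell)}$, so sorting these $k$ vertices by value agrees with sorting by $\pi_{q-1}$, matching the prescribed $q$-th permutation of $\b H$. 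Finally, the between-group ordering holds because in any valid block the interval at position $i$ strictly increases with $i$, so every $J_r$ is squeezed between the fixed intervals at the positions of $[t]\setminus\bJ'$ immediately above and below $i^*$, placing the $q$-th-group vertices properly between groups $1,\ldots,q-1$ and $q+1,\ldots,d$.
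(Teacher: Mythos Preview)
Your proposal is correct and follows essentially the same double-pigeonhole argument as the paper: first collapse to blocks sharing identical witness data for all $(d-1)$-permutation hypergraphs, then extract $k$ blocks differing in $k$ distinct intervals at a single coordinate $i^*\in\bJ$, and finally assemble an arbitrary $d$-permutation hypergraph in $\bProj_{\bJ\setminus\{i^*\}}\b G$ to reach a contradiction. The only cosmetic difference is that the paper indexes the edges of $\b H$ in increasing order of their $j_0^{\text{th}}$ vertex and then takes the ``diagonal'' $\E_{i,i}$, whereas you keep the natural indexing and select the lift from block $b_{\pi_{q-1}(\ell)}$; these are equivalent, and the paper's formulation sidesteps the $q=1$ conjugation wrinkle you (correctly) flag.
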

\begin{proof}
Suppose for the sake of contradiction that a particular ${\bJ}$-blockcolumn has at least $(k-1)^{t-d+1}\binom{s}{k}^{(d-1)(k!)^{d-2}}+1$ ${\bJ}$-wide blocks. For every $(d-1)$-permutation hypergraph $H$ on $k(d-1)$ vertices, we know that $H$ is contained in $\Proj_{\bJ} \b G_b$ for any of these blocks $b$. A copy of $H$ in $\Proj_{\bJ} \b G_b$ can occur in $\binom{s}{k}^{d-1}$ possible places (by a place we mean the injection of vertex sets given by containment), as for every $i\in [d-1]$ we must choose the $k$ locations in the corresponding interval that the $i^{th}$ vertices of the edges of $H$ are mapped to (there are $k$ such vertices). There are $(k!)^{d-2}$ such hypergraphs $H$, as for each set of $k$ vertices, except the first, we can match them up with the first $k$ by any permutation. Thus, there are $\binom{s}{k}^{(d-1)(k!)^{d-2}}$ ways that the copies of all the $(d-1)$-permutation hypergraphs on $k(d-1)$ vertices can occur in a block. Since two blocks in the same blockcolumn by the definition of blockcolumn have the same relevant intervals, by the Pigeonhole Principle, our blockcolumn must contain $(k-1)^{t-d+1}+1$ blocks where the copies of every $H$ occur on the exact same vertices.

Thus, again by the Pigeonhole Principle, there is some $i_0\in {\bJ}$ such that among these $(k-1)^{t-d+1}+1$ blocks, there are $k$ blocks such that the $i_0^{th}$ vertices of the edges of the $k$ blocks are in $k$ different intervals. Call these blocks $b_1,\ldots,b_k$, and assume they are sorted in increasing order of the interval the $i_0^{th}$ vertex is in.

\begin{claim} $\bProj_{{\bJ}\setminus \{i_0\}}\b G$ contains every $d$-permutation hypergraph $\b H$ on $kd$ vertices.
\end{claim}
\begin{proof}
Let $\G=\bProj_{\bJ\setminus \{i_0\}}\b G$, a $d$-uniform hypergraph.

Suppose that $i_0$ is the $j_0^{th}$ smallest element of $[n]\setminus \bJ$, so that $\Proj_{\bJ}$ and $\Proj_{j_0}\circ \bProj_{\bJ\setminus \{i_0\}}$ are the same operator. We now translate our conditions on $b_1,\ldots,b_k$ to conditions on blocks of $\G$. The blocks $b_1,\ldots,b_k$ will translate to blocks of $\G$, say $b'_1,\ldots,b'_k$. These blocks will have the property that for any $(d-1)$-permutation hypergraph $H'$ on $k(d-1)$ vertices, $\Proj_{j_0}\G_{b'_i}$ contains a copy of $H'$ for any $i$, moreover, these copies are located at exactly the same position for each $i$. Furthermore, the $j_0^{th}$ vertices of the blocks are all in different intervals.

In particular, $\Proj_{j_0}\G_{b'_i}$ contains a copy of $\Proj_{j_0}\b H$ in the same location for all $i$. We will use these copies to construct a copy of $\b H$ in $\G$.

Index the $k$ edges of $\b H$, $\be E_1,\ldots,\be E_k$, in increasing order of their $j_0^{th}$ vertex. Now, our copies of $\Proj_{j_0}\be H$ inside each $\Proj_{j_0}\G_{b'_i}$ give us compatible maps $E(\Proj_{j_0}\be H)\to E(\Proj_{j_0}\G_{b'_i})$ and $V(\Proj_{j_0}\b H)\to V(\Proj_{j_0}\G_{b'_i})$, where the second map is the same for all $i$ by our construction. Thus, for each edge $\be E_j\in \b H$, we can consider the edge it maps to in $\Proj_{j_0}\G_{b'_i}$, which in turn will be a projection of an edge in $\G$, which we denote by $\E_{i,j}$.

\begin{figure}
    \centering
		\includegraphics[width=\textwidth]{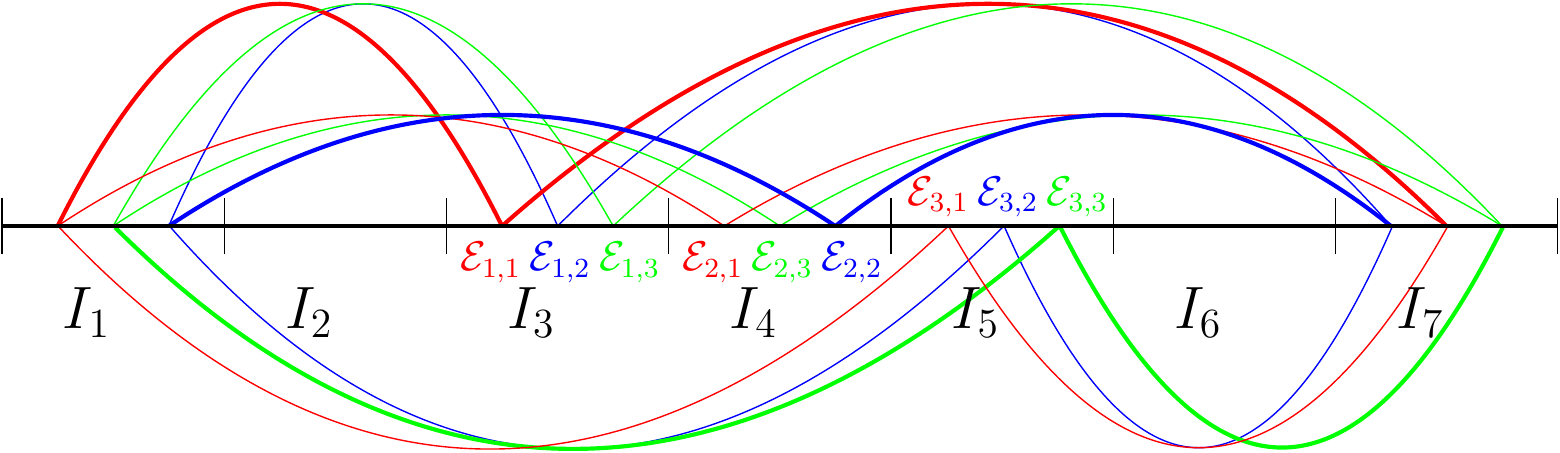}
	\caption{Example of position of edges $\E_{i,j}$. Color classes represent edges whose $\Proj_{j_0}$ image is the same (e.g., red is $\E_{1,1}$, $\E_{2,1}$, $\E_{3,1}$). Edges whose middle ($j_0^{th}$) vertex is in the same interval, belong to the same block (e.g., the middle vertex of $\E_{1,1}$, $\E_{1,2}$, $\E_{1,3}$ are all in $I_3$). The three bold edges from a $3$-permutation hypergraph $\b H$.}
	\label{fig:eij}
\end{figure}

By our construction, we know that the following hold (see Figure \ref{fig:eij}).

\begin{enumerate}
\item[(1)] $\Proj_{j_0}\E_{i,j}$ is independent of $i$, i.e., two edges, $\E_{i,j}$ and $\E_{i',j}$, differ only in their $j_0^{th}$ vertex. (This holds as the copies of $\Proj_{j_0}\b H$ occur in the same place in all blocks $i$.)
\item[(2)] For any $i$, the edges $\Proj_{j_0}\E_{i,j}$ (over all $j$) give us a copy of $\Proj_{j_0}\b H$, with $\Proj_{j_0}\E_{i,j}$ corresponding to edge $\be E_j$. 
\item[(3)] For any fixed $i$, the $j_0^{th}$ vertices of $\E_{i,j}$, $v_{i1},\ldots,v_{ik}$, are in the same interval. These intervals ``increase'' with $i$, i.e., $v_{i,j}$ is in an earlier interval than $v_{(i+1),j'}$ for all $i,j,j'$.
\end{enumerate}

We now claim that $\E_{i,i}$, $1\leq i\leq k$, forms a copy of $\b H$ inside $\G$. We know that all, except possibly the $j_0^{th}$ vertices of the edges, are in the correct place by (1) and (2). All $j_0^{th}$ vertices are greater than all $(j_0-1)^{th}$ vertices and less than all $(j_0+1)^{th}$ vertices by (3). Finally, the $j_0^{th}$ vertices are in the correct order because the $v_{i,i}$ will be sorted in increasing order of $i$ by (3), and we chose the edges in $\b H$ to be sorted in increasing order as well. This proves the claim.
\end{proof}

Since $\G=\Proj_{\bJ\setminus \{i_0\}}G$, we have that $\Proj_{\bJ\setminus \{i_0\}}\b G$ contains $\b H$. Since $\b H$ was an arbitrary $d$-permutation hypergraph on $kd$ vertices (and $\bJ$ and $i_0$ were chosen independently of $\b H$), we have that $\Proj_{\bJ\setminus \{i_0\}}\b G$ contains all $d$-permutation hypergraphs on $kd$ vertices, which is a contradiction. Thus, our assumption must be false and every $J$-blockcolumn must have at most $(k-1)^{t-d+1}\binom{s}{k}^{(d-1)(k!)^{d-2}}$ blocks that are $\bJ$-wide, which finishes the proof of the proposition.
\end{proof}

Note that since (for a particular $\bJ$) the $\bJ$-blockcolumns are chosen by fixing $d-1$ distinct intervals in increasing order, and there are $\left\lceil\frac{n}{s}\right\rceil$ intervals, there are at most $\binom{\left\lceil\frac{n}{s}\right\rceil}{d-1}$ $\bJ$-blockcolumns. Thus, the total number of $\bJ$-wide blocks is at most $(k-1)^{t-d+1}\binom{s}{k}^{(d-1)(k!)^{d-2}}\binom{\left\lceil\frac{n}{s}\right\rceil}{d-1}$. Since there are $\binom{t}{d-1}$ choices for $\bJ$, the total number of blocks that are $\bJ$-wide for some choice of $\bJ$ is at most
\[\binom{t}{d-1}(k-1)^{t-d+1}\binom{s}{k}^{(d-1)(k!)^{d-2}}\binom{\left\lceil\frac{n}{s}\right\rceil}{d-1},\]
and thus, the number of edges in $\b G'$ in blocks that are not thin (i.e., $\bJ$-wide for some $\bJ$) is at most
\[s^t\binom{t}{d-1}(k-1)^{t-d+1}\binom{s}{k}^{(d-1)(k!)^{d-2}}\binom{\left\lceil\frac{n}{s}\right\rceil}{d-1}\]
(since each block may contain at most $s^t$ edges).

Now, we bound the number of nonempty thin blocks. Form a new ordered hypergraph $\b G_s$ from $\b G'$ in the following manner: $\b G_s$ will have $\left\lceil\frac{n}{s}\right\rceil$ vertices corresponding to the intervals in $[n]=V(\b G')$. The edges will correspond to nonempty thin blocks in the following manner: every nonempty block corresponds to a choice of $t$ intervals, in which the corresponding vertices of each edge of the block will reside. For each such nonempty thin block, we add a hyperedge to $\b G_s$ whose $t$ vertices will be the $t$ intervals corresponding to that block. So $\b G_s$ will also be $t$-uniform.

\begin{prop} $e(\b G_s)\le c_{t,d,k}\lceil\frac{n}{s}\rceil^{d-1}$.
\end{prop}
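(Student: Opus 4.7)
The plan is to prove this proposition by applying Lemma~\ref{tuniform} recursively to $\b G_s$ itself, using the induction on $n$ built into the lemma. Since $\b G_s$ is $t$-uniform on $\lceil n/s\rceil$ vertices and $\lceil n/s\rceil<n$ once $s\geq 2$ and $n$ is large enough, the inductive hypothesis applies with the same parameters $t,d,k$. Suppose for contradiction that $e(\b G_s)>c_{t,d,k}\lceil n/s\rceil^{d-1}$; then there is some $\bJ\subset[t]$ with $|\bJ|=t-d$ such that $\bProj_{\bJ}\b G_s$ contains every $d$-permutation hypergraph on $kd$ vertices. I would derive a contradiction with the standing assumption on $\b G$ by promoting every such containment in $\bProj_{\bJ}\b G_s$ to a containment in $\bProj_{\bJ}\b G$.

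The lift goes as follows. Fix a $d$-permutation hypergraph $\b H$ on $kd$ vertices and take a copy inside $\bProj_{\bJ}\b G_s$: it consists of an order-preserving injection $V(\b H)\to V(\b G_s)$ picking out $kd$ intervals $I_{a_1}<\cdots<I_{a_{kd}}$, together with a compatible injection $E(\b H)\to E(\bProj_{\bJ}\b G_s)$ sending each edge of $\b H$ to an edge of $\b G_s$, which by construction is a nonempty thin block of $\b G'$. Within each of the $k$ selected blocks I would choose an arbitrary actual edge $\be E$ of $\b G$, and then define the vertex map $V(\b H)\to[n]$ by sending $v\in V(\b H)$ to the unique vertex of the chosen $\be E$ lying in the interval $I_v$ assigned to $v$ by the old injection.

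Three routine checks then establish that this is a valid copy of $\b H$ in $\bProj_{\bJ}\b G$. Injectivity holds because distinct vertices of $\b H$ go to distinct intervals of $\b G_s$ and the intervals partition $[n]$. Order preservation holds because the intervals $I_1,I_2,\dots$ are arranged monotonically inside $[n]$. Edge compatibility holds because every edge of $\b G$ inside a given block places its $t$ coordinates in the prescribed $t$ intervals, so after projecting away the coordinates in $\bJ$ the remaining $d$ vertices sit precisely in the visible intervals demanded by $\b H$.

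The one subtlety I anticipate is that the $k$ lifted edges must be pairwise vertex-disjoint in $\bProj_{\bJ}\b G$, as is needed to realize the vertex-disjoint edge structure of a $d$-permutation hypergraph. This is however immediate from the construction: since the $k$ edges of $\b H$ are vertex-disjoint in $V(\b H)$, they map to $k$ pairwise disjoint $d$-tuples of visible intervals in $V(\b G_s)$, and hence the lifted edges sit in disjoint subsets of $[n]$. Since $\b H$ was arbitrary, $\bProj_{\bJ}\b G$ contains every $d$-permutation hypergraph on $kd$ vertices, contradicting the standing assumption that $\b G$ does not satisfy the conclusion of the lemma.
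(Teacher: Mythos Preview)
Your proposal is correct and follows essentially the same approach as the paper: both apply the inductive hypothesis on $n$ to $\b G_s$ (with the same parameters $t,d,k$), and then lift a putative containment in $\bProj_{\bJ}\b G_s$ to one in $\bProj_{\bJ}\b G$ by choosing a representative edge from each block and using monotonicity of the intervals to preserve order. Your writeup is simply more explicit about the lift (injectivity, order preservation, edge compatibility, and disjointness), whereas the paper compresses this into the single remark that ``orders in $\b G_s$ are preserved in $\b G$.''
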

\begin{proof}
Using the induction hypothesis (on $n$), it is enough to show that there is no $\J\subset [t]$, $|\J|=t-d$, such that $\bProj_{\J} \b G_s$ contains all $d$-permutation hypergraphs $\b H$ on $kd$ vertices. Suppose the contrary. For each such $\b H$, this gives a set of $k$ edges in $\bProj_{\J} \b G_s$ (and thus, $k$ edges in $\b G_s$) that exhibit the containment. These correspond to $k$ edges of $\b G$, and since orders in $\b G_s$ are preserved in $\b G$, projecting these $k$ edges by $\J$ will also give a copy of $\b H$ in $\bProj_{\J} \b G$. Thus, $\bProj_{\J} \b G$ contains all $d$-permutation hypergraphs $\b H$ on $kd$ vertices, so $\bProj_{\J} \b G$ does as well, again a contradiction. 
\end{proof}

We now put these parts together. 
We have shown the following:
\begin{enumerate}
\item $\b G$ has at most $c_{t-1,d,k}(t-1)(s-1)n^{d-1}$ edges with vertices in the same interval.
\item We may divide the remaining edges into blocks. There are at most
\[s^t\binom{t}{d-1}(k-1)^{t-d+1}\binom{s}{k}^{(d-1)(k!)^{d-2}}\binom{\left\lceil\frac{n}{s}\right\rceil}{d-1}\]
edges in non-thin blocks.
\item There are at most $c_{t,d,k}\lceil\frac{n}{s}\rceil^{d-1}$ nonempty thin blocks and each has at most $c_{t,d-1,k}s^{d-2}$ edges.
\end{enumerate}
Combining these, we obtain a bound.
\begin{align*}
|E(\b G)| & \leq c_{t,d-1,k}s^{d-2}c_{t,d,k}\left\lceil\frac{n}{s}\right\rceil^{d-1}+c_{t-1,d,k}(t-1)(s-1)n^{d-1} \\ & +s^t\binom{t}{d-1}(k-1)^{t-d+1}\binom{s}{k}^{(d-1)(k!)^{d-2}}\binom{\left\lceil\frac{n}{s}\right\rceil}{d-1} \\ & =c_{t,d-1,k}s^{d-2}c_{t,d,k}\left\lceil\frac{n}{s}\right\rceil^{d-1}+O\left(n^{d-1}\right),
\end{align*}
where the hidden constant in the $O$ notation does not depend on $c_{t,d,k}$.
Choosing the constant $s$ to be greater than $c_{t,d-1,k}$, the right hand side will be less than $c_{t,d,k}n^{d-1}$ for any sufficiently large constant $c_{t,d,k}$, completing the proof of Lemma \ref{tuniform}.
\end{proof}

We now use Lemma \ref{tuniform} to prove Theorem \ref{hypthm}.

Take some $d$-permutation hypergraph $H$ on $kd$ vertices, and let $G$ be a hypergraph on $[n]$ that avoids $H$. Again, break $[n]$ up into $\left\lceil\frac{n}{s}\right\rceil$ intervals of size at most $s$, for some $s$ (constant in $n$) that will be chosen later. Similarly to the proof of Lemma \ref{tuniform}, we form a new (multi)hypergraph $G_s$ on $\left[\left\lceil\frac{n}{s}\right\rceil\right]$, where the vertices correspond to the intervals. For each edge $E$ of $G$, we give $G_s$ an edge $E'$ so that an interval $I$ is a vertex of $E'$ if and only if $I$ contains a vertex of $E$. We then form a graph $G_s'$ by eliminating repeated edges of $G_s$. Note that, similarly to the proof of Lemma \ref{tuniform}, as $G$ avoids $H$, $G_s$ and $G_s'$ must as well.

Note that the edges of $G_s$ of size at least $kd$ can be repeated at most $k-1$ times, as $k$ copies of the same edge of size $kd$ would contain a copy of $H$. Now, edges of $G$ of size greater than $(kd-1)s$ must correspond to edges of size greater than $kd-1$ in $G_s$. We can split $i(G)$ into the contribution of edges of size at most $(kd-1)s$ and edges of size greater than $(kd-1)s$, say $i_<(G)$ and $i_>(G)$. The latter edges are repeated at most $k-1$ times and have their size reduced by a factor of at most $s$ when going from $G$ to $G_s'$, so $i_>(G)\leq (k-1)s\cdot i(G_s')$.

Now, for some $t$, the graph $G^t$ consisting of all size-$t$ edges of $G$ must also avoid $H$, so $\Proj_J G^t$ must avoid $H$ for all $J$ of size $t-d$. Thus, Lemma \ref{tuniform} implies that $e(G^t)\leq c_{t,d,k}n^{d-1}$. Summing up to $(kd-1)s$ and weighting by edge sizes, we see that $i_<(G)\leq\left(\displaystyle\sum_{t=1}^{(kd-1)s}tc_{t,d,k}\right)n^{d-1}$. Putting this together, we obtain that
\[i(G)\leq(k-1)s\cdot i(G_s')+\left(\displaystyle\sum_{t=1}^{(kd-1)s}tc_{t,d,k}\right)n^{d-1}.\]

Let $g(n)$ be the maximum value of $i(G)$ over all ordered hypergraphs $G$ on $[n]$ that avoid $H$. What we have shown above is that
\[g(n)\leq (k-1)s\cdot g\left(\left\lceil\frac{n}{s}\right\rceil\right)+\left(\displaystyle\sum_{t=1}^{(kd-1)s}tc_{t,d,k}\right)n^{d-1}=(k-1)s\cdot g\left(\left\lceil\frac{n}{s}\right\rceil\right)+O(n^{d-1}).\]
If $d>2$, we can choose $s>k-1$ and then the solution to this recurrence will be $O(n^{d-1})$, as desired. This just leaves the $d=2$ case, but this is simply Theorem \ref{thm:km}, finishing the proof.

\section{Proof of Theorem \ref{mainthm}}
We now will use Theorem \ref{hypthm} to prove Theorem \ref{mainthm}.

First note that the case $\text{pm}(\pi)=0$ is simple, as when $\pi=1/2/\cdots/k$, $\pi'$ avoids $\pi$ if and only if $\pi$ has at most $k-1$ blocks. Thus $B_n(\pi)\leq (k-1)^n$, so we may simply let $c_2$ equal $k-1$. For the remainder of the proof, we assume $\text{pm}(\pi)\geq 1$.

Note that if $\pi$ is contained in $\pi'$, then $B_n(\pi')\geq B_n(\pi)$. Thus, since every permutability-$d$ partition is contained in $[\sigma_1,\ldots,\sigma_d]$ for some permutations $\sigma_1,\ldots,\sigma_d$ (by definition of permutability), it suffices to show Theorem \ref{mainthm} in the case where $\pi=[\sigma_1,\ldots,\sigma_d]$.

Let $\sigma_1,\ldots,\sigma_d\in S_k$, $\pi=[\sigma_1,\ldots,\sigma_d]$ be the corresponding partition of $[(d+1)k]$, and $H$ be the $(d+1)$-permutation hypergraph on $[(d+1)k]$ vertices with edges $\{i,k+\sigma_1(i),\ldots,dk+\sigma_d(i)\}$ for $1\leq i\leq k$.

We want to show that there exists $c_2>0$ such that $B_n([\sigma_1,\ldots,\sigma_d])\leq c_2^n n^{n\left(1-\frac{1}{d}\right)}$ for all $n\in\mathbb{Z}^+$.

Note that $H$ is in essence the hypergraph corresponding to the set partition $[\sigma_1,\ldots,\sigma_d]$; the edges correspond to blocks. We can formalize this in the following definition.
\begin{defn}
Let $\pi$ be a set partition of $[n]$. Then the \emph{hypergraph corresponding to $\pi$} is simply the $1$-regular hypergraph whose edges are exactly given by the blocks of $\pi$.
\end{defn}

Note that in the case of hypergraphs corresponding to set partitions, the notion of set partition avoidance is exactly the same as that of hypergraph avoidance. Take any set partition $\pi'$ on $[n]$ avoiding $\pi$, and let $G$ be the hypergraph on $n$ vertices corresponding to $\pi'$. Then by this observation $G$ must avoid $H$.

Given a positive integer $s$ (possibly depending on $n$), we may construct a new hypergraph $G'$ on $[s]$ as follows. First, we divide $[n]$ into $s$ intervals $I_1,\ldots,I_s$ (in increasing order) so that each has size $\left\lfloor\frac{n}{s}\right\rfloor$ or $\left\lceil\frac{n}{s}\right\rceil$ (the number of each depends on the value of $n$ modulo $s$). For each edge $E\in G$, we construct an edge $E'$ on the vertex set $[s]$ by the rule that $j\in E'$ if and only if $I_j$ contains at least one vertex of $G$. Finally, we remove duplicate edges to obtain $G'$.

For example, if $G$ is the hypergraph $\{1,4\}, \{2,5,6\}, \{3\}$ on $[6]$ and $s=2$, then $I_1=\{1,2,3\}$ and $I_2=\{4,5,6\}$, and $G'$ will be on the vertex set $[2]$ and have edges $\{1,2\}$ and $\{1\}$.

Suppose that $G'$ contained $H$. Then we can find $k$ edges $E'_1,\ldots,E'_k$ in $G'$, and for each edge $E'_i$, $d+1$ vertices, $v'_{i,1},\ldots,v'_{i,d+1}$, that give the containment. But each edge $E'_i$ must arise from at least one $E_i\in G$. Choose such an $E_i$ for each $E'_i$. Then every vertex $v'_{i,j}$ must have at least one corresponding $v_{i,j}\in I_{v'_{i,j}}\cap E_i$, by the definition of $G$. Choose such a $v_{i,j}$ for every $v'_{i,j}$. Then the edges $E_i$ and the vertices $v_{i,j}$ represent a copy of $H$ in $G$, as the $v_{i,j}$ have relative ordering the same as that of $v'_{i,j}$ since $I_1,\ldots,I_s$ are arranged in increasing order. This contradicts our assumption that $G$ does not contain $H$, so $G'$ must in fact not contain $H$ either.

Note that $G'$ need not be $1$-regular, as in the example above, so we will begin by bounding the total number of hypergraphs on $s$ vertices avoiding $H$.

\begin{lemma}\label{genhyplemma}
There exists $c>0$ such that for all $n\in\mathbb{Z}^+$, there are at most $c^{n^d}$ ordered hypergraphs on $[n]$ that avoid $H$.
\end{lemma}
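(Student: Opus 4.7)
The plan is to prove Lemma \ref{genhyplemma} by strong induction on $n$ via a divide-and-conquer decomposition, in the spirit of the proof of Lemma \ref{tuniform}. The main input is Theorem \ref{hypthm}, which guarantees $i(G) = O(n^d)$ for every $H$-avoiding hypergraph $G$ on $[n]$. The base case, for $n$ below a fixed threshold, is trivial because the total number of hypergraphs on a constant-size vertex set is itself a constant.

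For the inductive step, fix a constant $s$ (to be chosen in terms of the implicit constant in Theorem \ref{hypthm} and the parameters of $H$), and partition $[n]$ into $s$ intervals $I_1, \ldots, I_s$ of size $r \approx n/s$. Given an $H$-avoiding hypergraph $G$, form the coarse hypergraph $G'$ on $[s]$ by mapping each edge $E$ of $G$ to $\{j : E \cap I_j \neq \emptyset\}$ and then deduplicating; by the same argument used in the discussion preceding Lemma \ref{genhyplemma}, $G'$ again avoids $H$, so there are only $f_H(s) = O(1)$ possibilities for $G'$. For each such $G'$, decompose $G = \bigsqcup_{E' \in G'} G_{E'}$, where $G_{E'}$ is the set of edges of $G$ whose collapse equals $E'$. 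Each $G_{E'}$ is itself $H$-avoiding and lives on the $|E'| \cdot r$ vertices in $\bigcup_{j \in E'} I_j$.

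When $|E'| < s$, the vertex set of $G_{E'}$ has size $|E'| r < n$, so by the inductive hypothesis there are at most $c^{(|E'| r)^d}$ possibilities. Using Theorem \ref{hypthm} applied to $G'$, we have $\sum_{E' \in G'} |E'| \le Cs^d$; combined with the elementary inequality $|E'|^d \le (s-1)^{d-1}|E'|$ for $|E'| \le s-1$, this gives $\sum_{|E'| < s}(|E'| r)^d \le C(s-1)^{d-1} r^d \cdot s^d = O(n^d s^{d-1})$, which is $O(n^d)$ for $s$ constant. Thus the total contribution from all blocks $E'$ with $|E'| < s$ is at most $c^{O(n^d)}$, of the desired order.

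The main obstacle is the case $|E'| = s$, i.e., the single spanning block $E' = [s]$ (should it appear in $G'$), where the induction does not directly apply since $|E'| \cdot r = n$. Here each edge of $G_{[s]}$ spans all $s$ intervals and hence has size at least $s$, so $|G_{[s]}| \le Cn^d/s$ by the weight bound; however, the number of possible spanning subsets of $[n]$ is $(2^r - 1)^s \approx 2^n$, and a naive count of subsets of this family gives a bound of order $2^{O(n^{d+1}/s)}$, which is much too large. Handling this case is the crux of the argument: the plan is to iterate the interval decomposition at a finer scale by refining each $I_j$ into sub-intervals and tracking the slice structure $(E \cap I_j)_{j \in [s]}$ of each spanning edge, invoking the inductive hypothesis on the smaller scale to force the count in this case down to $c^{O(n^d)}$. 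Once this is achieved, combining all contributions yields $f_H(n) \le f_H(s) \cdot c^{O(n^d)} = c^{O(n^d)}$, and choosing $c$ sufficiently large closes the induction.
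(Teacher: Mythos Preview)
There is a genuine gap. The spanning block $E'=[s]$, which you defer, is not a boundary case but the main case: once $s$ is a fixed constant, essentially every edge of a typical $H$-avoiding $G$ can meet all $s$ intervals, so counting $G_{[s]}$ is as hard as counting $G$ itself, and your proposed ``iterate at a finer scale'' fix is not a local patch but the entire argument. Moreover, even setting the spanning block aside, your induction does not close: applying the inductive hypothesis $f(m)\le c^{m^d}$ to each non-spanning block yields a total exponent $\sum_{|E'|<s}(|E'|r)^d\le Cs^{d-1}n^d$, so at best $f(n)\le f(s)\cdot c^{Cs^{d-1}n^d}$, and since $Cs^{d-1}>1$ no choice of $c$ recovers $f(n)\le c^{n^d}$.

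The paper's proof simply invokes the argument of Theorem~2.5 in \cite{K2}, which is precisely the dyadic refinement you gesture at, carried out globally. One halves $[n]$ repeatedly: at stage $t$ the contracted hypergraph $G^{(t)}$ lives on $2^t$ interval-vertices, and passing from $G^{(t)}$ to $G^{(t+1)}$ amounts to recording, for each edge $E\in G^{(t)}$ and each $j\in E$, which of the two halves of interval $j$ each refined edge meets (three options per vertex). Each edge of $G^{(t+1)}$ thus costs $O(|E|)$ bits to specify from its parent $E$, and the total over all edges at stage $t+1$ is $O(i(G^{(t+1)}))=O(2^{(t+1)d})$ bits by Theorem~\ref{hypthm}. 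Summing this geometric series over $t\le\log_2 n$ gives an $O(n^d)$-bit encoding of $G$, hence at most $2^{O(n^d)}$ such hypergraphs. No spanning/non-spanning dichotomy, and no recursion on $f$ itself, is needed.
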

\begin{proof}[Proof of Lemma \ref{genhyplemma}]
We know that $i(G)=O(n^d)$  for any $G$ on $[n]$ avoiding $H$ by Theorem \ref{hypthm}. The exact same argument as in the proof of Theorem 2.5 from \cite{K2} (with $2$ replaced by $d+1$) then shows that the number of hypergraphs on $[n]$ avoiding $H$ is $2^{O(n^d)}$, as desired.
\end{proof}
Thus, there are at most $c^{s^d}$ possibilities for $G'$.

We now bound the number of set partitions $\pi$, and corresponding hypergraphs $G$ on $[n]$ that can correspond to a given $G'$ on $[s]$. It is clear that $i(G)\geq i(G')$ for any $G$ on $[n]$ corresponding to $G'$ on $[s]$ (as $G'$ is formed by contracting parts of edges and deleting duplicates), so since $i(G)=n$ (as $G$ corresponds to a set partition), $i(G')\leq n$.

The number of blocks of each size of $\pi$ correspond to an integer partition of $n$, and it is well known that there are $e^{o(n)}$ integer partitions of $n$. Now, fix an integer partition of $n$, and suppose $i$ occurs $c_i$ times. We want to bound the number of partition-correspondent hypergraphs $G$ with $c_i$ edges of size $i$ that correspond to $G'$. By counting vertices we see that $\displaystyle\sum_{i=1}^n ic_i=n$.

Each edge $E$ of size $i$ of $G$ corresponds to some edge $E'$ of size at most $i$ of $G'$. By a weak bounding argument, there are at most $n$ edges of $G'$, as $i(G')\leq n$. Once one of these at most $n$ edges is chosen to be $E'$, of size at most $i$, this gives $i$ size-$\frac{n}{s}$ intervals where the vertices of $E$ can lie. Thus, there are at most $\binom{\frac{in}{s}}{i}$ choices for $E$ given $E'$, giving at most $n\binom{\frac{in}{s}}{i}$ total choices for $E$. Choosing all size-$i$ edges simultaneously, and dividing by $c_i!$ to account for the fact that the edges are not distinguishable, we obtain that there are at most $\frac{\left(n\binom{\frac{in}{s}}{i}\right)^{c_i}}{c_i!}$ ways to choose all edges of size $i$ simultaneously. (Some choices of edges contradict each other--for example, if they share a vertex of $G$--but this will only decrease the number of options.) Therefore, the total number of ways to choose the set partition $\pi$ to correspond to $G'$ is at most
\[e^{o(n)}\displaystyle\max_{c_1+2c_2+\cdots+nc_n=n}\displaystyle\prod_{i=1}^n\frac{\left(n\binom{\frac{in}{s}}{i}\right)^{c_i}}{c_i!}.\]
Since there are at most $c^{s^d}$ ways to choose $G'$, this implies that
\[B_n(\pi)\leq c^{s^d}e^{o(n)}\displaystyle\max_{c_1+2c_2+\cdots+nc_n=n}\displaystyle\prod_{i=1}^n\frac{\left(n\binom{\frac{in}{s}}{i}\right)^{c_i}}{c_i!}.\]
By a (very) weak form of Stirling Approximation, $i!>\frac{i^i}{e^i}$ for all $i$ (for example, using $\left(1+\frac{1}{i}\right)^i<e$ and telescoping the left hand product from $i=1$ to $i=n-1$). Therefore,
\[\binom{\frac{in}{s}}{i}\leq\frac{\left(\frac{in}{s}\right)^i}{i!}=\frac{i^i}{i!}\left(\frac{n}{s}\right)^i<\left(\frac{en}{s}\right)^i.\]
Thus
\begin{align*}
\displaystyle\max_{c_1+2c_2+\cdots+nc_n=n}\displaystyle\prod_{i=1}^n\frac{\left(n\binom{\frac{in}{s}}{i}\right)^{c_i}}{c_i!} & \leq\displaystyle\max_{c_1+2c_2+\cdots+nc_n=n}\displaystyle\prod_{i=1}^n\frac{\left(n\left(\frac{en}{s}\right)^i\right)^{c_i}}{c_i!} \\ & \leq\displaystyle\max_{c_1+2c_2+\cdots+nc_n=n}\displaystyle\prod_{i=1}^n\frac{n^{c_i}\left(\frac{en}{s}\right)^{ic_i}}{c_i!} \\ & =\left(\frac{en}{s}\right)^n\displaystyle\max_{c_1+2c_2+\cdots+nc_n=n}\displaystyle\prod_{i=1}^n\frac{n^{c_i}}{c_i!}.
\end{align*}
Substituting this into our bound for $B_n(\pi)$, we obtain
\begin{align*}
B_n(\pi) & \leq c^{s^d} e^{o(n)}\left(\frac{en}{s}\right)^n \displaystyle\max_{c_1+2c_2+\cdots+nc_n=n}\displaystyle\prod_{i=1}^n\frac{n^{c_i}}{c_i!}\\ & =\frac{c^{s^d}}{s^n}e^{O(n)}n^n\displaystyle\max_{c_1+2c_2+\cdots+nc_n=n}\displaystyle\prod_{i=1}^n\frac{n^{c_i}}{c_i!}.
\end{align*}
The fraction on the left is the only part of this expression that depends on $s$, so we may choose $s$ to minimize it. The minimum occurs when $s$ is within a constant factor of $n^{\frac{1}{d}}$, so since we do not know the value of $c$, we will simply choose $s=n^{\frac{1}{d}}$. (Not coincidentally, this minimization is analagous to Brightwell's in \cite{B}, with the same result of $s\approx n^{\frac{1}{d}}$.)

Substituting this value of $s$, we see that $c^{s^d}=e^{O(n)}$, so we obtain the bound
\[B_n(\pi)\leq e^{O(n)}n^{n\left(1-\frac{1}{d}\right)}\displaystyle\max_{c_1+2c_2+\cdots+nc_n=n}\displaystyle\prod_{i=1}^n\frac{n^{c_i}}{c_i!}.\]
Therefore, to finish the problem and show that the right hand side is within an exponential factor of $n^{n\left(1-\frac{1}{d}\right)}$, it simply suffices to show that $\displaystyle\max_{c_1+2c_2+\cdots+nc_n=n}\displaystyle\prod_{i=1}^n\frac{n^{c_i}}{c_i!}=e^{O(n)}$, or in other words, that $\displaystyle\max_{c_1+2c_2+\cdots+nc_n=n}\displaystyle\sum_{i=1}^n (c_i\ln(n)-\ln(c_i!))=O(n)$.

By a previous approximation we know that $\ln(c_i!)>c_i\log c_i-c_i$, so
\[\max_{c_1+2c_2+\cdots+nc_n=n}\displaystyle\sum_{i=1}^n (c_i\ln(n)-\ln(c_i!))\leq \max_{c_1+2c_2+\cdots+nc_n=n}\displaystyle\sum_{i=1}^n (c_i(\ln(n)+1)-c_i\ln(c_i)).\]
Now, the function $c_i(\ln n+1)-c_i\ln c_i$ is concave in $c_i$, so we may use Lagrange Multipliers to maximize our expression subject to the restriction $\displaystyle\sum_{i=1}^n ic_i=n$. (The extrema of the domain, where all but one $c_i$ is $0$, clearly satisfy the desired inequality.) We see that the optimum occurs where the vectors $(\log n-\log c_i)$ and $(i)$ are proportional; that is, $c_i=\frac{n}{a^i}$ for some $a>0$ ($a$ will depend on $n$). Then
\[\displaystyle\sum_{i=1}^n(c_i(\ln(n)+1)-c_i\ln(c_i))=\displaystyle\sum_{i=1}^n (c_i+ic_i\ln(a))\leq n\ln(a)+n,\]
as $\displaystyle\sum_{i=1}^n c_i\leq\displaystyle\sum_{i=1}^n ic_i=n$. Thus, it suffices to show that $a$ is bounded independently of $n$.

The value of $a$ is determined by the equation $n=\displaystyle\sum_{i=1}^n ic_i=\displaystyle\sum_{i=1}^n\frac{in}{a^i}$, so $\displaystyle\sum_{i=1}^n\frac{i}{a^i}=1$. It is clear that $a>1$. Thus
\begin{align*}
1 & =\displaystyle\sum_{i=1}^n\frac{i}{a^i} \\ & <\displaystyle\sum_{i=1}^{\infty}\frac{i}{a^i} \\ & =\frac{\frac{1}{a}}{\left(1-\frac{1}{a}\right)^2} \\ & =\frac{a}{(a-1)^2}.
\end{align*}
Therefore, $(a-1)^2<a$, so $a^2-3a+1<0$, so $a<\frac{3+\sqrt{5}}{2}$. In particular, $a$ is bounded independently of $n$, finishing the proof.
\section{Proof of Theorem \ref{permthm}}
We now turn in the direction of parallel avoidance, by proving Theorem \ref{permthm}.

Note that the upper bound follows quite simply from Theorem \ref{mainthm}, as given $\sigma_1,\ldots,\sigma_d\in S_m$, each $(\sigma_1',\ldots,\sigma_d')\in S_n^d$ that avoids $(\sigma_1,\ldots,\sigma_d)$ yields a different set partition $[\sigma_1',\ldots,\sigma_d']$ of $[(d+1)n]$ avoiding $[\sigma_1,\ldots,\sigma_d]$, which has permutability $d$. Thus
\[S_n^d(\sigma_1,\ldots,\sigma_d)\leq B_{(d+1)n}([\sigma_1,\ldots,\sigma_d])\leq c_2^n ((d+1)n)^{\left(1-\frac{1}{d}\right)(d+1)n}\]
for some $c_2>0$, which gives the desired upper bound.

Now, we show the lower bound, which will turn out to follow easily from previously known results on random orders. Let $\sigma_1,\ldots,\sigma_d\in S_m$ with $m>1$. Then restricting $\sigma_i$ to their first two elements will yield some permutation that is an element of $S_2$; that is, either $12$ or $21$. Thus
\[S_n^d(\sigma_1,\ldots,\sigma_d)\geq S_n^d(\sigma'_1,\ldots,\sigma'_d)\]
where $\sigma'_i$ is either $12$ or $21$ for all $i$. Now, some permutation $\pi$ contains $21$ (that is, has an inversion) in exactly the indices where the complement of $\pi$ (if $\pi\in S_n$, the complement of $\pi$ is given by replacing each $i$ by $n+1-i$) contains $12$. Thus, replacing, say, all first permutations by their complement gives a bijection between $S_n^d(12,\sigma'_2,\ldots,\sigma'_d)$ and $S_n^d(21,\sigma'_2,\ldots,\sigma'_d)$. Doing this for all indices, we see that we can replace each $21$ by a $12$, so
\[S_n^d(\sigma_1,\ldots,\sigma_d)\geq S_n^d(12,\ldots,12).\]
Thus, it suffices to prove the lower bound when all permutations are $12$; that is, it suffices to show that there exists $c_1>0$ with
\[c_1^nn^{n\frac{d^2-1}{d}}\leq S_n^d(12,\ldots,12).\]

We first translate to the language of probabilities. Let $q_{d}(n)$ be the probability that randomly chosen $\sigma_1,\ldots,\sigma_k\in S_n$ will have $(\sigma_1,\ldots,\sigma_d)$ avoiding $(12,\ldots,12)$. Note that since there are $n!^k$ ways to choose $k$ permutations in $S_n$, $q_{d}(n)=\frac{S_n^d(12,\ldots,12)}{n!^d}$. We know that $n!$ is within an exponential factor of $n^n$ by Stirling approximation, so if we divide the desired statement by $n!^d$, we obtain that we want to show
\[c_1^n n^{-\frac{n}{d}}\leq q_{d}(n)\]
for all $n\in\mathbb{N}$ for some constant $c_1>0$.

We now translate the problem into the language of random $(d+1)$-dimensional orderings as follows. Let $p_1,\ldots,p_n$ be random points (in the usual sense) in $[0,1]^{d+1}$. We can sort them by their first coordinates. Once this is done, looking at the ordering of the $i^{th}$ coordinates of all $n$ points for some fixed $2\leq i\leq d+1$ will generate a permutation, so we get $d$ permutations $\sigma_1,\ldots,\sigma_d$ given by these orderings. It is easy to see that these permutations are independently and uniformly randomly chosen.

Now, we consider the (random) poset, also known as the random $(d+1)$-dimensional order $P_{d+1}(n)$, on these points as follows. We say that $p_i<p_j$ if and only if all coordinates of $p_i$ are less than those of $p_j$. Suppose $p_i$ has the $a_i$-th smallest first coordinate, and similarly $p_j$ has the $a_j$-th smallest. Then the condition that $p_i<p_j$ corresponds to (looking at the first coordinate) the condition that $a_i<a_j$, and (looking at the other $k$ coordinates) the condition that $\sigma_{\ell}(a_i)<\sigma_{\ell}(a_j)$ for all $\ell\in[k]$. This idea of relating sets of $d$ permutations to random $(d+1)$-dimensional orderings seems to go back to Winkler. \cite{W}

By definition, $(\sigma_1,\ldots,\sigma_d)$ avoids $(12,\ldots,12)$ if and only if there is no $b_1<b_2$ with $\sigma_i(b_1)<\sigma_i(b_2)$ for all $i$, and we can see by the previous paragraph that this is in turn equivalent to there being no pair of comparable elements in $P_{d+1}(n)$; that is, $P_{d+1}(n)$ is an antichain. Crane and Georgiou \cite{CG} derived from results of Brightwell \cite{B} that this probability is at least $\left(\frac{1}{e}+o(1)\right)^n n^{-\frac{n}{d}}$, finishing the proof of the lower bound.
\section{Set Partition Pattern Classes}
We may consider pattern classes of partitions; that is, collections of set partitions that are closed downward under containment, in the following sense.
\begin{defn}
A \emph{pattern class} $\mathcal{C}$ of set partitions is a set of set partitions such that if $A\in \mathcal{C}$ and $A$ contains $B$, $B\in \mathcal{C}$.
\end{defn}
For example, if $\pi$ is a set partition, the set partitions that avoid $\pi$ form a pattern class. For a pattern class $\mathcal{C}$, we can let $\mathcal{C}_n\subset\mathcal{C}$ consist of the partitions of $[n]$ in $\mathcal{C}$. We then can consider the growth rate of $C$ by looking at the sequence $|\mathcal{C}_n|$.

Theorem \ref{mainthm} allows us to prove the following result, classifying the growth rate of set partition pattern classes to within an exponential factor.
\begin{cor}
Let $\mathcal{C}$ be a nonempty pattern class of set partitions, not containing all set partitions, and let $d$ be the smallest positive integer such that there exists a set partition of permutability $d$ not in $\mathcal{C}$. Then there exists $c_2>c_1>0$ such that for all $n\in\mathbb{Z}^+$,
\[c_1^n n^{n\left(1-\frac{1}{d}\right)}\leq |\mathcal{C}_n|\leq c_2^n n^{n\left(1-\frac{1}{d}\right)}.\]
\end{cor}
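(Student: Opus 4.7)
The plan is to derive both bounds directly from results already established in the paper, using the minimality of $d$ to connect $\mathcal{C}$ with the forbidden-pattern setting.

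For the upper bound, I would extract a single forbidden pattern: by the minimality of $d$, there is a partition $\pi^{\ast}$ with $\text{pm}(\pi^{\ast})=d$ and $\pi^{\ast}\notin\mathcal{C}$. Since $\mathcal{C}$ is downward-closed under Klazar containment, every member of $\mathcal{C}$ avoids $\pi^{\ast}$, so $|\mathcal{C}_n|\le B_n(\pi^{\ast})$, and Theorem \ref{mainthm} immediately yields $|\mathcal{C}_n|\le c_2(\pi^{\ast})^n\, n^{n(1-1/d)}$, providing $c_2$.

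For the lower bound, consider first the main case $d\ge 2$. The minimality of $d$ guarantees that every partition of permutability at most $d-1$ lies in $\mathcal{C}$: permutability $1,\ldots,d-1$ by the defining property of $d$, and permutability $0$ by downward closure from a suitable permutability-$1$ partition (which is already known to be in $\mathcal{C}$, e.g.\ $[\mathrm{id}_n]$, whose restriction to $\{1,\ldots,n\}$ is the singleton-only partition of $[n]$). I would then replay the construction from the proof of Theorem \ref{lowerbound} verbatim: for $n=dm$, each of the $(m!)^{d-1}$ partitions $[\sigma_1,\ldots,\sigma_{d-1}]$ with $\sigma_i\in S_m$ has permutability exactly $d-1$ and size $n$, hence lies in $\mathcal{C}_n$, so $|\mathcal{C}_n|\ge(m!)^{d-1}$; Stirling converts this to the desired $c_1^n\, n^{n(1-1/d)}$ bound. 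For $n$ not divisible by $d$, I would adjoin singleton blocks (which preserves permutability, and hence membership in $\mathcal{C}$) and absorb the resulting $\binom{n}{i}$ factor into $c_1$, exactly as at the end of the proof of Theorem \ref{lowerbound}.

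The case $d=1$ is essentially trivial: the desired lower bound reduces to $|\mathcal{C}_n|\ge c_1^n$, and since $\mathcal{C}$ is nonempty and downward-closed, $\mathcal{C}_n\ne\varnothing$ for every $n$ (under the implicit assumption that $\mathcal{C}$ contains partitions of arbitrarily large size), so any $c_1\le 1$ suffices. I do not anticipate any significant obstacle here: the corollary is a clean repackaging of Theorems \ref{lowerbound} and \ref{mainthm}, with the minimality of $d$ playing exactly the role needed so that the permutability-$(d-1)$ constructions used in the lower bound remain inside $\mathcal{C}$ while some permutability-$d$ partition is excluded, providing a forbidden pattern for the upper bound.
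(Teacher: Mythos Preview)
Your proposal is correct and follows essentially the same approach as the paper: the upper bound comes from a single forbidden pattern of permutability $d$ via Theorem~\ref{mainthm}, and the lower bound comes from observing that the constructions in the proof of Theorem~\ref{lowerbound} produce partitions of permutability at most $d-1$, all of which lie in $\mathcal{C}$ by minimality of $d$. You are, if anything, slightly more careful than the paper in separating out the $d=1$ case and flagging the implicit assumption that $\mathcal{C}_n$ is nonempty for all $n$.
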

\begin{proof}
Let $\pi$ have permutability $d$, and not be contained in $\mathcal{C}$. Then all elements of $\mathcal{C}$ must avoid $\pi$, by the definition of pattern class. Thus, $|\mathcal{C}_n|\leq B_n(\pi)$, so Theorem \ref{mainthm} proves the upper bound.

For the lower bound, it suffices to notice that the argument of Theorem \ref{lowerbound}gives in fact a lower bound on the number of partitions of $[n]$ of permutability at most $d-1$, and by our assumption all of these partitions are contained in $\mathcal{C}$. This proves the corollary.
\end{proof}

In particular, we have the following corollary.
\begin{cor}\label{classcor}
All pattern classes must grow as $B_n$, eventually become $0$, or grow within an exponential factor of $n^n{\left(1-\frac{1}{d}\right)}$ for some positive integer $d$.
\end{cor}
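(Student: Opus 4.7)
The plan is to partition the possibilities for $\mathcal{C}$ into three cases matching the three alternatives in the statement. First, if $\mathcal{C}$ contains every set partition, then $|\mathcal{C}_n| = B_n$ automatically. Second, if $\mathcal{C}$ is finite, let $N$ be the maximum size of a partition in $\mathcal{C}$; then $|\mathcal{C}_n| = 0$ for all $n > N$. What remains is the case that $\mathcal{C}$ is infinite yet does not contain every partition, and this is where the real work lies.

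Before the main case split in the remaining case, I would record that $|\mathcal{C}_n| \geq 1$ for every $n$: since $\mathcal{C}$ is infinite it contains some partition on a ground set of size at least $n$, and restricting that partition to any $n$-element subset yields a partition of $[n]$ that is Klazar-contained in it, hence lies in $\mathcal{C}$ by downward-closure. This handles the lower bound whenever the upper bound is only exponential. Next I would split on whether every permutability-$0$ partition (that is, every all-singleton $1/2/\cdots/k$) lies in $\mathcal{C}$. If yes, then the smallest positive integer $d$ for which some permutability-$d$ partition is missing from $\mathcal{C}$ is well-defined, and every partition of permutability strictly less than $d$ lies in $\mathcal{C}$; this is exactly the hypothesis of the preceding corollary, whose bounds close this subcase. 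If instead some all-singleton $1/2/\cdots/k$ is missing from $\mathcal{C}$, every element of $\mathcal{C}$ must avoid it, so $|\mathcal{C}_n| \leq B_n(1/2/\cdots/k) \leq c_2^n$ by the $\text{pm}(\pi)=0$ clause of Theorem \ref{mainthm}; combined with $|\mathcal{C}_n| \geq 1 \geq c_1^n$ for any $c_1 \in (0,1]$, this places $\mathcal{C}$ in the $d=1$ branch of the third alternative, since $n^{n(1-1/1)} = 1$.

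There is no genuine obstacle; the proof is essentially bookkeeping once the previous corollary and Theorem \ref{mainthm} are in hand. The one subtlety worth flagging is that the preceding corollary requires every partition of permutability strictly less than $d$ to lie in $\mathcal{C}$, which silently assumes all permutability-$0$ partitions are in $\mathcal{C}$. The second subcase above is precisely the residual situation where this assumption fails, and it is handled directly by the $\text{pm}(\pi)=0$ case of Theorem \ref{mainthm} rather than by recursion into the previous corollary.
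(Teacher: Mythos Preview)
Your proof is correct and follows the paper's approach: the paper treats this corollary as an immediate consequence of the preceding one without a separate argument, and your write-up is essentially that derivation made explicit, with the natural trichotomy (all partitions / finite / otherwise) spelled out. Your extra care with the subcase where some all-singleton partition $1/2/\cdots/k$ is missing from $\mathcal{C}$ is warranted, since the lower-bound step in the preceding corollary tacitly relies on all permutability-$0$ partitions lying in $\mathcal{C}$ when $d=1$; your direct appeal to the $\text{pm}(\pi)=0$ clause of Theorem~\ref{mainthm} together with $|\mathcal{C}_n|\geq 1$ cleanly closes that gap.
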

Corollary \ref{classcor} shows that there is an infinite sequence of `jumps' in pattern class growth rates, from $n^{n\left(1-\frac{1}{k-1}\right)}$ to $n^{n\left(1-\frac{1}{k}\right)}$ (modulo an exponential factor) for all $k$.
\section{Further Directions}
There are several possible directions to attempt to extend these results.

One such direction is the computation of the correct exponential factor for parallel avoidance in simple cases. That is, does $\displaystyle\lim_{n\to\infty}\sqrt[n]{\frac{S_n^d(\sigma_1,\ldots,\sigma_d)}{(n!)^{\frac{d^2-1}{d}}}}$ exist, and if so what is its value in simple cases? From results in \cite{CG} on $Q_3(n)$ we can see that in the $d=2$ case with $\sigma_1=\sigma_2=12$, we have a lower bound of $1$ and an upper bound of $3\sqrt{3(3\log 3-4\log 2)}\approx 3.76$. Weaker results in \cite{CG} apply for more than two copies of $12$, but no other cases appear to have been studied, and the problem appears quite difficult even in these cases. Similarly, we may try to compute the correct exponential factor for set partition avoidance in simple cases, though this seems similarly difficult. Another problem may be to prove the existence of the limit above, and similarly for set partitions, although the fact that this problem is open for the case of one permutation is not encouraging.

Another natural (perhaps more tractable) question is whether it is possible to classify the growth rates of pattern classes of $d$-tuples of permutations in a similar way to this paper's treatment of set partitions. When the pattern class has no basis elements, the answer is obviously $n!^d$, and with exactly one basis element, Theorem \ref{permthm} shows the speed of the pattern class is within exponential of $\left(n!\right)^{\frac{d^2-1}{d}}$. However, not all (proper) pattern classes grow at this rate; the class given by avoiding $(21,12)$ and $(21,21)$ simply grows as $n!$, as the first element of any pair in this pattern class must be the identity. Indeed, the product of a pattern class of $d$ tuples and a pattern class of $d'$-tuples will be a pattern class of $d+d'$-tuples, and using this, for any $d$, we may form a pattern class of $d$-tuples that grows within exponentially of $n^{\alpha n}$, where
\[\alpha=d-\displaystyle\sum_{i=1}^m\frac{1}{d_i},\]
with $m\in\mathbb{Z}^+$ and $\sum d_i\leq d$. Are any other growth rates possible?

Set partitions may be thought of as ordered graphs in which every connected component is a clique (the blocks are just given by the sets of vertices in connected components). Note that in this setting, set partition containment becomes the relation of taking an induced subgraph. Since all induced subgraphs also have all connected components cliques, they also correspond to set partitions. Thus all pattern classes of set partitions correspond to hereditary properties (properties closed under taking an induced subgraph) of ordered graphs. Thus Theorem \ref{mainthm} motivates the question: What may be said about factorial growth rates of hereditary properties of ordered graphs? The first superexponential jump (from $c^n$ to $c^n n^{\frac{n}{2}}$) is conjectured and proven in special cases in \cite{BBM}, but this problem still appears to be open, as well as that of higher jumps (such as those that exist in the set partition case, as given by Corollary \ref{classcor}).

A more tangential potential notion for further study is that of the permutability statistic and its distribution. To the authors' knowledge, this statistic has not explicitly appeared before in the literature, and given its strong connection to asymptotics, it may be worthwhile to study.
\section{Acknowledgements}
The first author would like to thank Ryan Alweiss for inspiring his interest in the problem of the asymptotics of set partition pattern avoidance, Adam Hammett for extremely helpful correspondence about the link to random $k$-dimensional orders and previous work done in that area, Mitchell Lee for conversations about parallel avoidance, and Adam Marcus and Mitchell Keller for their insight into their previous work.


\begin{thebibliography}{9}
\bibitem{AADHHM} R.E.L. Aldred, M.D. Atkinson, H.P. van Ditmarsch, C.C. Handley, D.A. Holton, D.J. McCaughan, \emph{Permuting Machines and Priority Queues}, Theoretical Computer Science, Vol. 349, Iss. 3, pp. 309-317, 2005.

\bibitem{Ryan} Ryan Alweiss, \emph{Asymptotic results on Klazar set partition avoidance}, \url{https://arxiv.org/abs/1608.02279}, 2016.

\bibitem{B} Graham Brightwell, \emph{Random $k$-Dimensional Orders: Width and Number of Linear Extensions}, Order, Vol. 9, Iss. 4, pp. 333-342, 1992.

\bibitem{BBM} J\'ozsef Balogh, B\'ela Bollob\'as, and Robert Morris, \emph{Hereditary Properties of Partitions, Ordered Graphs and Ordered Hypergraphs}, European Journal of Combinatorics, Vol. 27, Iss. 8, pp. 1263-1281, 2006.

\bibitem{CG} Edward Crane and Nic Georgiou, \emph{Notes on Antichains in the Random $k$-Dimensional Order}, Bristol Workshop on Random Antichains in $k$-Dimensional Partial Orders (2011), received via personal communication with Adam Hammett 8/18/2016.

\bibitem{FH} Zolt\'an F\"uredi and P\'eter Hajnal, \emph{Davenport-Schinzel theory of matrices}, Discrete
Math., Vol. 103, 233-251, 1992.



\bibitem{HP} Adam Hammett and Boris Pittel, \emph{How Often are Two Permutations Comparable?}, Transactions of the American Mathematical Society, Vol. 360, No. 9, pp. 4541-4568, 2008.

\bibitem{K} Martin Klazar, \emph{Counting Pattern-free Set Partitions I: A Generalization of Stirling Numbers of the Second Kind}, European Journal of Combinatorics, Vol. 21, Iss. 3, pp. 367-378, 2000.

\bibitem{K2} Martin Klazar, \emph{Counting Pattern-free Set Partitions II: Noncrossing and Other Hypergraphs}, Electronic Journal of Combinatorics, Vol. 7, R34, 2000.

\bibitem{K3} Martin Klazar, \emph{The F\"uredi-Hajnal conjecture implies the Stanley-Wilf conjecture},
Formal Power Series and Algebraic Combinatorics (D. Krob, A. A. Mikhalev, and A. V. Mikhalev, eds.), Springer, Berlin, 2000, pp. 250-255.

\bibitem{KM} Martin Klazar and Adam Marcus, \emph{Extensions of the Linear Bound in the F\"{u}redi-Hajnal Conjecture}, Advances in Applied Mathematics, Vol. 38, Iss. 2, pp. 258-266, 2007.

\bibitem{M} Toufik Mansour, \emph{Combinatorics of Set Partitions}, CRC Press, 2012.

\bibitem{StanleyWilf} Adam Marcus and G\'{a}bor Tardos, \emph{Excluded permutation matrices and the Stanley-Wilf conjecture}, Journal of Combinatorial Theory, Series A, Vol. 107,  Iss. 1, pp. 153-160, 2004.

\bibitem{MP} Abhishek Methuku and D\"om\"ot\"or P\'alv\"olgyi, \emph{Forbidden hypermatrices imply general bounds on induced forbidden subposet problems}, Combinatorics, Probability and Computing, to appear. 

\bibitem{OEIS} N. J. A. Sloane, editor, The On-Line Encyclopedia of Integer Sequences, published electronically at \url{https://oeis.org}, accessed 9/11/2016.

\bibitem{W} Peter Winkler, \emph{Random Orders}, Order, Vol. 1, Iss. 4, pp. 317-331, 1985.
\end{thebibliography}
\end{document}